\newtheorem*{theorem*}{Theorem}
\newtheorem{theorem}{Theorem}[section]
\newtheorem{proposition}[theorem]{Proposition}
\newtheorem{corollary}[theorem]{Corollary}
\newtheorem{lemma}[theorem]{Lemma}
\theoremstyle{definition}
\newtheorem{definition}[theorem]{Definition}
\newtheorem{example}[theorem]{Example}
\theoremstyle{remark}
\newtheorem{remark}[theorem]{Remark}
\numberwithin{equation}{section}
\newcommand\mL{L\kern-0.08cm\char39}
\begin{document}

%

\title{Persistent Shadowing For Actions Of Some Finitely Generated Groups And Related Measures}
\author { Ali Barzanouni}
\address{Department of Mathematics, School of Mathematical Sciences, Hakim Sabzevari University, Sabzevar, Iran}
\email{a.barzanouni@hsu.ac.ir, barzanouniali@gmail.com} \subjclass[2010]{Primary: 37C85; Secondary: 37B25, 37B05}

\keywords{ Shadowing, Persistent, Borel Measure}
\date{}
\maketitle

\begin{abstract}
In this paper, $\varphi:G\times X\to X$ is a continuous action of
finitely generated group $G$ on compact metric space $(X, d)$
without isolated point. We introduce the notion of persistent
shadowing property for  $\varphi:G\times X\to X$ and study it via measure theory.  Indeed, we introduce the notion of compatibility the  Borel probability measure
$\mu$ with respect persistent shadowing property of $\varphi:G\times
X\to X$ and denote it by $\mu\in\mathcal{M}_{PSh}(X, \varphi)$. We
show $\mu\in\mathcal{M}_{PSh}(X, \varphi)$ if and only if
$supp(\mu)\subseteq PSh(\varphi)$, where $PSh(\varphi)$ is the set of all persistent shadowable points of $\varphi$. This implies that if every non-atomic Borel probability measure
$\mu$ is compatible with persistent shadowing property for
$\varphi:G\times X\to X$, then $\varphi$ does have persistent
shadowing property.  We prove that
$\overline{PSh(\varphi)}=PSh(\varphi)$ if and only if
$\overline{\mathcal{M}_{PSh}(X, \varphi)}= \mathcal{M}_{PSh}(X,
\varphi)$. Also, $\mu(\overline{PSh(\varphi)})=1$ if and only if
$\mu\in\overline{\mathcal{M}_{PSh}(X, \varphi)}$. Finally, we show
that $\overline{\mathcal{M}_{PSh}(X, \varphi)}=\mathcal{M}(X)$ if
and only if $\overline{PSh(\varphi)}=X$.
 For study of persistent
shadowing property, we introduce the notions of uniformly $\alpha$-persistent point,  uniformly $\beta$-persistent point  and  recall notions of shadowing property,
$\alpha$-persistent,  $\beta$-persistent and we give some further
results about them.

\end{abstract}
\section{Introduction}
A continuous action  is a triple  $(X, G, \varphi)$  where $X$ is a
compact metric space  with a metric $d$ and  $G$ is  a finitely
generated group with the discrete topology which acts on $X$, such
that the action $\varphi$ is continuous.
  We denote a continuous action $(X, G, \varphi)$ by $\varphi:G\times X\to X$, also we denote by $Act(G;X)$ the set of all
continuous actions $\varphi$ of $G$ on $X$. Let  $S$ be a finite
generating set of $G$.  We consider a metric $d_S$ on $Act(G;X)$ by

\begin{equation*}
d_S(\varphi, \psi)= \sup \{d(\varphi(s, x), \psi(s, x)): x\in X,
s\in S\}
\end{equation*}
for $\varphi, \psi\in Act(G; X)$.\\
A map $f:G\to X$ is called a  $\delta$-pseudo orbit for a continuous
action $\varphi:G\times X\to X$ ( with respect to $S$), if $d(f(sg),
\varphi(s, f(g)))<\delta$ for all $s\in S$ and all $g\in G$. A
$\delta$-pseudo orbit $f:G\to X$ (with respect to $S$ ) is
$\epsilon$-shadowed by $\varphi$-orbit $x\in X$, if $d(f(g),
\varphi(g, x))<\epsilon$ for all $g\in G$.  A continuous action
$\varphi:G\times X\to X$ has shadowing property (with respect to
$S$) if for every $\epsilon>0$ there is a $\delta>0$ such that every
$\delta$-pseudo orbit $f:G\to X$ for $\varphi$ can be $
\epsilon$-shadowed by the $\varphi$-orbit of a point $p\in X$, this
means that $d(f(g), \varphi(g, p))<\epsilon$ for all $g\in G$. The
notion of shadowing property for actions of finitely generated
groups introduced by Osipov and Tikhomirov in \cite{osipov}. They
showed that the shadowing property for actions of finitely generated
groups depends  on both of the  hyperbolic properties of actions of
its elements and  the group structure. For example, if $G$ is a
finitely generated nilpotent group and action of one element in $G$
is hyperbolic, then the group action has the shadowing property
while it cannot be directly generalized to the case of solvable
groups.

 The notion of topological stability for an action
of a finitely generated group on a compact metric space was
introduced by
 Chung
and Lee in  \cite{chung} and they gave a group action version of the
Walter's stability theorem.  Indeed,  a continuous action
$\varphi:G\times X\to X$ is topologically stable (with respect to
$S$) , if for every $\epsilon>0$ there is $\delta>0$ such that for
every continuous action   $\psi:G\times X\to X$  with $d_S(\varphi,
\psi)<\delta$,  there is a continuous map $f:X\to X$ such that
 $d_{C^0}(f, id)<\epsilon$ and $\varphi_gf=f\psi_g$ for all $g\in G$. Moreover, $\varphi$ is called $s$-topologically stable when there exists a surjective  continuous map $f:X\to X$ that satisfies  the mentioned properties.
  If $\varphi:G\times X\to X$ is topologically stable, then  for every $\epsilon>0$ there is $\delta>0$ such that for every $x\in X$ and every continuous action $\psi:G\times X\to X$ with $d_S(\varphi, \psi)<\delta$,  we have $d(\varphi(g, f(x)), \psi(g, x))<\epsilon$ for all $g\in G$. Having this  property is well known to say that the  continuous action $\varphi$ is $\alpha$-persistent. When $\varphi$ is $s$-topologically stable,  for every $\epsilon>0$ there is $\delta>0$ such that for every $x\in X$ and every continuous action $\psi:G\times X\to X$ with $d_S(\varphi, \psi)<\delta$,  we can say that if $f(y)=x$, then  $d(\varphi(g, x ), \psi(g, y))<\epsilon$ for all $g\in G$.
  In this case,  $\varphi$ is called $\beta$-persistent. In other words, a dynamical
system is $\beta$- persistent if its trajectories can be seen on
every small perturbation of it.
Although $s$-topologically stable implies
$\beta$-persistent but topological stability does not imply
$\beta$-persistent. For example, Sakai and Kobayashi \cite{sakai1}
observed that the full shift on two symbols is not
$\beta$-persistent while it is topologically stable. Recently, the
authors in \cite{jung},  introduced a new
tracing property for a homeomorphism $f:X\to X$ referred to as persistent shadowing property and proved that a homeomorphism has persistent shadowing property
 if and only if it has shadowing property and it is $\beta$-persistent. This implies that a homeomorphism has persistent shadowing property if and only if it
  is pointwise persistent shadowable.\\

  \medskip
\noindent

In this paper, we extend   the notion of persistent shadowing
property for a continuous action $\varphi:G\times X\to X$ of some
finitely generated group $G$ on metric space $(X, d)$. Persistent shadowing property is stronger than
of shadowing property and $\beta$-persistent, but in equicontinuous
actions, shadowing and persistent shadowing properties are
equivalent. This implies that every equicontinuous action on the
Cantor space $X$ does have persistent shadowing property. The notion
of persistent shadowing property does  not depend on the choice of a
symmetric finitely generating set and it does not depend on choice
of metric $X$ if $X$ is compact metric space. But Example
\ref{example2}  shows that compactness is essential. Assume that $H$
be a subgroup of $G$. It may be happen that $\varphi:H\times X\to X$
does have the persistent shadowing property
         while $\varphi:G\times X\to X$ does not have it. But in Proposition \ref{syndetic}, we show that if $H$ is a syndetic subgroup of $G$, then the situation
         is different. Also we study relation between persistent shadowing property of $\varphi:G\times X\to X$ and $\varphi_g:X\to X$.
          There is system $\varphi:G\times X\to X$ with persistent shadowing property while $\varphi_g:X\to X$ does not have persistent shadowing property.
         If $G$ is free group, then the situation is different. Indeed in Proposition \ref{op2},
         we show that if $F_2=\langle a, b\rangle$ is a free group and $\varphi:F_2\times X\to X$ has shadowing property, then $\varphi_{a^{-1}b}:X\to X$ has
         persistent shadowing property. Also, one can check that these results do hold for notions of shadowing property, $\alpha$-persistent and $\beta$-persistent, see Remark\ref{224} and Remark \ref{225}\\
\medskip
\noindent

 Recently, in \cite{ali2}, we introduced the notion of
compatibility of a measure with respect to $\alpha$-persistent. We extend this notion with respect persistent shadowing property and the set of
compatibility measures with persistent shadowing property for
$\varphi$ is denoted by $\mathcal{M}_{PSh}(X, \varphi)$, see  Subsection \ref{s22}. We show that
$\mathcal{M}_{PSh}(X, \varphi)$ is an $F_{\sigma\delta}$ subset of
$\mathcal{M}(X)$ and for $\mu\in\mathcal{M}_{PSh}(X, \varphi)$ and
homeomorphism $f:X\to Y$, we have  $f_*(\mu)\in
\mathcal{M}_{PSh}(Y, f\circ \varphi\circ f^{-1})$  where $f\circ
\varphi\circ f^{-1}:G\times Y\to Y$ is defined by
        $f\circ \varphi\circ f^{-1}(g, x)=f\circ \varphi_g\circ f^{-1}(x)$, see Proposition \ref{kj}. In Proposition \ref{pki}, we show that
         if measure $\mu$ is compatible with persistent shadowing property  for continuous action $\varphi$, then $\varphi$ does have persistent shadowing property on
          $supp(\mu)$. This implies that if every non-atomic probability measure is compatible with persistent shadowing property  for continuous action $\varphi$,
         then $\varphi$ has persistent shadowing property. Also, we introduce compatibility a measure with respect shadowing property, $\alpha$-persistent and
         $\beta$-persistent for continuous action $\varphi:G\times X\to X$ and denote them by
         $\mathcal{M}_{Sh}(X, \varphi)$, $\mathcal{M}_\alpha(X, \varphi)$ and $\mathcal{M}_{\beta}(X, \varphi)$, respectively.  Results of Proposition \ref{pki}
          can be obtain for compatibility a measure in the case of shadowing property and
$\beta$-persistent, see Remark \ref{pkii}.

         In Section 3, we introduce the notions of   persistent shadowable points, uniformly $\alpha$-persistent point, uniformly $\beta$-persistent point
         and denote them by $PSh(\varphi)$, $UPersis_\alpha(\varphi)$ and $UPersis_\beta(\varphi)$, respectively. Also, we recall  the notions of shadowable points,
          $\alpha$-persistent points, $\beta$-persistent points for continuous action
$\varphi:G\times X\to X$ and denote them by $Sh(\varphi)$,
$Persis_\alpha(\varphi)$ and $Persis_\beta(\varphi)$, respectively.
Although $Sh(\varphi)\subseteq UPersis_\alpha(\varphi)\subseteq
Persis_\alpha(\varphi)$ but Example \ref{non-shadowable} shows that
the $Sh(\varphi)\neq UPersis_\alpha(\varphi)$ and
$Upersis_\alpha(\varphi)\neq Persis_\alpha(\varphi)$. For
equicontinuous action $\varphi:G\times X\to X$, we have
$UPersis_\beta(\varphi)=Persis_\beta(\varphi)$ and
$Persis_\alpha(\varphi)\subseteq Persis_\beta(\varphi)$. Moreover,
if $X$ is generalized homogeneous compact metric space, then
$Sh(\varphi)=UPersis_\alpha(\varphi)=Persis_\alpha(\varphi)$, see Proposition \ref{u}.\\
In Subsection \ref{400}, we study persistent shadowable point for a
group action and in item 3 of Proposition \ref{wok}, we show that
   Continuous action $\varphi:G\times X\to X$  has the persistent shadowing property if and only if it is pointwise persistent shadowable. Also in item 4 of
    Proposition \ref{wok}, we prove that $PSh(\varphi)= UPersis_\beta(\varphi)\cap Sh(\varphi)$. This implies that continuous action $\varphi:G\times X\to X$ has
     persistent shadowing property if and only if it is $\beta$-persistent and it has shadowing
     property.\\
    In Subsection \ref{401}, we study  various shadowable points via
    measure theory. Indeed,  for continuous action $\varphi:G\times X\to X$ on compact metric space $(X,
    d)$  and Borel probability measure
    $\mu$,   we show that
 $\mu\in M_{PSh}(X, \varphi)\Leftrightarrow
    supp(\mu)\subseteq PSh(\varphi)$, see Proposition \ref{Lb}. In Proposition \ref{Lbb} we show that
$\mu(\overline{PSh(\varphi))}=1\Leftrightarrow
\mu\in\overline{\mathcal{M}_{PSh}(X,
    \varphi)}$ and in Proposition \ref{12354}, we show that $\overline{PSh(\varphi)}=PSh(\varphi)$ if and only if
$\overline{\mathcal{M}_{PSh}(X, \varphi)}=\mathcal{M}_{PSh}(X,
\varphi)$. Note that, result of this paragraph, obtain for other
types of shadowing property, see Proposition \ref{12355}. In
equicontinuous action $\varphi:G\times X\to X$,
$Persis_\beta(\varphi)=UPersis_\beta(\varphi)$ is closed set in $X$,
hence we can say that $\overline{\mathcal{M}_{\beta}(X,
\varphi)}=\mathcal{M}_\beta(X, \varphi)$ if $\varphi:G\times X\to X$
is equicontinuous action. This implies that
$\mu(Persis_\beta(\varphi))=1$ if and only if
$\mu\in\mathcal{M}_\beta(X, \varphi)$, whenever $\varphi:G\times
X\to X$ is equicontinuous action.
 Finally, In Proposition  \ref{3214}, we show that
$\overline{\mathcal{M}_{PSh}(X, \varphi)}=\mathcal{M}(X)
    \Leftrightarrow \overline{PSh(\varphi)}=X$, $\overline{\mathcal{M}_{Sh}(X, \varphi)}=\mathcal{M}(X)
    \Leftrightarrow \overline{Sh(\varphi)}=X$,  $\overline{\mathcal{M}_\beta(X, \varphi)}=\mathcal{M}(X)
    \Leftrightarrow \overline{Persis_\beta(\varphi)}=X$ and  $\overline{\mathcal{M}_{\alpha}(X, \varphi)}=\mathcal{M}(X)
    \Leftrightarrow \overline{Persis_\alpha(\varphi)}=X$.

\section{Persistent shadowing property }
In this section, firstly, we extend  the notion of persistent shadowing property  from \cite{jung} to group actions and study it.
     \begin{definition}\label{def21}
 A continuous action $\varphi:G\times X\to X$ has persistent shadowing property ( with respect $S$) if for every $\epsilon>0$ there is $\delta>0$ such that
  every $\delta$-pseudo orbit $f:G\to X$ for $\psi:G\times X\to X$ with $d_S(\varphi, \psi)<\delta$ can be $(\psi, \epsilon)$-shadowed by a point $p\in X$.

\end{definition}
It is not hard to see that  notion of persistent shadowing property
does  not depend on the choice of a symmetric finitely generating
set. Also one can check that the this notion does not depend on
choice of metric $X$ if $X$ is compact metric space. The following
example shows that compactness is essential.

\begin{example}\label{example2}
Let $T:\mathbb{R}\to S^1\setminus\{(0, 1)\}$ be a map given by
\begin{equation*}
T(t)=(\frac{2t}{1+t^2}, \frac{t^2-1}{t^2+1}), \quad \text{for all
}t\in\mathbb{R},
\end{equation*}
and let $X=T(\mathbb{Z})$. Let $d'$ be the metric on $X$ induced by
the Riemannian metric on $S^1$, and let $d$ be a discrete metric on
$X$. It is clear that $d$ and $d'$ induce the same topology on $X$.
Let $g_1 :X\to X$ be a homeomorphism defined by $g(a_i)=a_{i+1}$ and
$g_2:X\to X$ be defined by $g_2(a_i)= a_{i+2}$. Consider  action
$\varphi:G\times X\to X$ generated $g_1, g_2:X\to X$. Since the
metric $d$ is discrete, one can see that $\varphi$ does have
persistent shadowing property. By contradiction, let $\varphi$ has
persistent shadowing property with respect to $ d' $. Hence it does
have shadowing property with respect to $d'$.


 For $\epsilon=\frac{1}{2}$ and $z\in X$, let $\delta>0$ be
an $ \epsilon $-modulus of  shadowing property of  continuous action
$\varphi$. Choose $k\in\mathbb{N}$ satisfying $d'(a_k,
a_{-k})<\frac{\delta}{2}$, and consider a homeomorphisms $f_i:X\to
X$, $i=1, 2$, given by
 \[ f_1(a_i)= \left\lbrace
  \begin{array}{c l}
     a_{i+1}, & \text{\rm{ $ i\in\{-k, \ldots, k-1\}$}},\\
a_{-k}, & \text{\rm{$i=k$}},\\
a_i, & \text{\rm{otherwise}}.
  \end{array}
\right. \] and
 \[ f_2(a_i)= \left\lbrace
  \begin{array}{c l}
     a_{i+2}, & \text{\rm{ $ i\in\{-k, \ldots, k-2\}$}},\\
     a_{-k+1}, &\text{\rm{$i=k-1$}}\\
a_{-k}, & \text{\rm{$i=k$}},\\
a_i, & \text{\rm{otherwise}}.
  \end{array}
\right. \]

 Since $d'(f_i(x), g_i (x))<\delta$ for all
$x\in X$, hence if $\psi:G\times X\to X$ generated by $f_1, f_2$,
then $d(\varphi, \psi)<\delta$ and for $x\in X$   there is $z\in X$
such that $d'(\varphi(g, z), \psi(g, x))<\epsilon$, this implies
that  $d'(g_{i}^n(z), f^n_i(y))<\epsilon$. Since $\{g_1^n(z), n\in
\mathbb{Z}\}=X$, so we can find an integer $ n\in \mathbb{Z}$ such
that $d'(g^n_1(z),f^n_1(y))\geq\epsilon$, which is a contradiction.
Therefore $\varphi$ does have persistent shadowing property with
respect to $ d $ but it does not have  persistent shadowing property
with respect to $ d' $.
\end{example}

\subsection{The Action of Syndetic Subgroups}\label{s20}

Let $BS(1, n)=\langle a, b: ba = a^nb \rangle$ and $\varphi:G\times
\mathbb{R}^2\to \mathbb{R}^2$ be generated by $f_a(x)=Ax$ and
$f_b(x)=Bx$ where
\begin{equation}
A=\left(%
\begin{array}{cc}
  1 & 0 \\
  1 & 1 \\
\end{array}%
\right)  \;\& \;\;
B=\left(%
\begin{array}{cc}
  \lambda & 0 \\
  0 & n\lambda \\
\end{array}%
\right)
\end{equation}
Then for $1<\lambda\leq n$ and $n>1$, $f_b$ has persistent shadowing
property. In \cite{jung}, it is shown that the homeomorphism $f:X\to
X$ does have persistent shadowing property if and only if it does
have shadowing property and it is $\beta$-persistent. Hence if
$H=\langle b\rangle$ is a subgroup of $B(1, n)$, then
$\varphi|H:H\times \mathbb{R}^2\to \mathbb{R}^2$ has persistent
shadowing property while by \cite[Theorem 4.4(1)]{osipov},
$\varphi:G\times \mathbb{R}^2\to \mathbb{R}^2$ does not have
shadowing property.
In this subsection, we show that if $H$ is a syndetic subgroup of $G$, then situation is different.\\
 Let $||g||_{S}$ denote the length of the shortest representation  of the element $g$ in term of element from $S$. For continuous action $\varphi:G\times X\to X$   on compact metric space $(X,d)$, $\epsilon>0$ and $k\in\mathbb{N}$, there is $\delta>0$ such that for all $g\in G$ with $||g||_S<k$
\begin{align}\label{u}
d(x, y)<\delta\Rightarrow d(\varphi(g, x), \varphi(g,
y))<\frac{\epsilon}{k}
\end{align}
By triangle inequality, it can to see that the following lemma is
true.
\begin{lemma}\label{pseudo}
Let $S$ be a finitely generating set of $G$ and $\varphi:G\times
X\to X$ be a continuous action on compact metric space $(X,d)$. For
$\epsilon>0$ and $N\in\mathbb{N}$, there is $\delta>0$ such that if
$f:G\to X$ is a $\delta$-pseudo orbit, then for every $h\in G$ with
$||h||_S<N$ and every $g\in G$ we have $d(f(hg), \varphi(h,
f(g)))<\epsilon$
\end{lemma}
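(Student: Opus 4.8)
The statement asks: for $\epsilon>0$ and $N\in\mathbb N$, find $\delta>0$ so that any $\delta$-pseudo orbit $f:G\to X$ satisfies $d(f(hg),\varphi(h,f(g)))<\epsilon$ whenever $\|h\|_S<N$ and $g\in G$. The plan is to proceed by induction on the word length of $h$, using the displayed uniform-continuity estimate \eqref{u} to control how errors propagate along a geodesic representation of $h$.

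First I would fix $\epsilon>0$ and $N$, and apply \eqref{u} with $k=N$ to obtain a $\delta_0>0$ such that $d(x,y)<\delta_0$ implies $d(\varphi(g,x),\varphi(g,y))<\epsilon/N$ for all $g$ with $\|g\|_S<N$. Then I would set $\delta=\min\{\delta_0,\epsilon/N\}$ (one may need to shrink further; see below). Now take a $\delta$-pseudo orbit $f$ and $h\in G$ with $\|h\|_S<N$, say $h=s_m s_{m-1}\cdots s_1$ with $s_i\in S\cup S^{-1}$ and $m=\|h\|_S<N$. Write $h_j=s_j\cdots s_1$ for $0\le j\le m$, so $h_0=e$ and $h_m=h$. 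The goal is to estimate $d(f(h_j g),\varphi(h_j,f(g)))$ by induction on $j$. For the inductive step, insert the intermediate point $\varphi(s_j, f(h_{j-1}g))$:
\begin{align*}
d(f(h_j g),\varphi(h_j,f(g))) &\le d\bigl(f(s_j h_{j-1} g),\varphi(s_j, f(h_{j-1}g))\bigr) + d\bigl(\varphi(s_j,f(h_{j-1}g)),\varphi(s_j,\varphi(h_{j-1},f(g)))\bigr).
\end{align*}
The first term is $<\delta$ directly from the $\delta$-pseudo-orbit condition (with generator $s_j$ and group element $h_{j-1}g$). The second term is $<\epsilon/N$ by \eqref{u} applied with the single generator $s_j$ (which has length $1<N$) to the pair $f(h_{j-1}g)$, $\varphi(h_{j-1},f(g))$, provided the inductive hypothesis gives these within $\delta_0$ — which forces me to keep the running total below $\delta_0$ at every stage, not just at the end. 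So the honest choice is to apply \eqref{u} not with $k=N$ but to demand $d(x,y)<\delta_0 \Rightarrow d(\varphi(s,x),\varphi(s,y))<\min\{\epsilon/N,\;\delta_0/N\}$ for each generator $s$, and take $\delta<\delta_0/N$ as well; then the accumulated error after $j\le m<N$ steps is bounded by $j\cdot(\delta + \epsilon/N) < N\cdot(\delta_0/N + \epsilon/N) = \delta_0+\epsilon$, and one arranges the constants so this stays below $\delta_0$ throughout and below $\epsilon$ at the end. Cleanly: choose $\delta_0$ with $d(x,y)<\delta_0\Rightarrow d(\varphi(s,x),\varphi(s,y))<\epsilon/(2N)$ for all $s\in S\cup S^{-1}$, shrink $\delta_0$ so that $\delta_0<\epsilon/2$, and set $\delta=\delta_0/N$; induction then yields $d(f(h_jg),\varphi(h_j,f(g)))<j\delta+j\,\epsilon/(2N)\le \delta_0+\epsilon/2<\epsilon$, and in particular every partial sum is $<\delta_0$, validating each use of \eqref{u}.

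The main obstacle is precisely this bookkeeping: the naive application of \eqref{u} "with $k=N$" gives an $\epsilon/N$ bound per step, but to invoke it at step $j$ one needs the previous error already below the \emph{input} threshold $\delta_0$, so the constants must be chosen so the cumulative error never escapes the ball on which uniform continuity was quantified. Picking $\delta$ proportional to $\delta_0/N$ and the per-step jump proportional to $\epsilon/(2N)$ resolves this. One minor point to state explicitly: the estimate \eqref{u} must be available for inverse generators as well, which is why I work with the symmetric set $S\cup S^{-1}$; this is harmless since the paper already notes persistent shadowing is independent of the choice of symmetric generating set. The "by triangle inequality" remark in the text is exactly the telescoping argument above, so the write-up is short once the $\delta$ is pinned down.
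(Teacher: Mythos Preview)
Your overall strategy---telescope along a reduced word for $h$ and control each step by uniform continuity---is exactly what the paper means by ``by triangle inequality'' together with the estimate \eqref{u}. However, the constant bookkeeping in your ``clean'' version has a genuine gap. Your inductive step requires $E_{j-1}:=d(f(h_{j-1}g),\varphi(h_{j-1},f(g)))<\delta_0$ in order to invoke uniform continuity at step $j$, and from this you get $E_j<\delta+\epsilon/(2N)$. To continue, you need $\delta+\epsilon/(2N)\le\delta_0$; but $\delta_0$ is dictated by uniform continuity and may be far smaller than the fixed quantity $\epsilon/(2N)$, so this inequality need not hold. (Your earlier attempt to demand the output bound $\min\{\epsilon/N,\delta_0/N\}$ is circular, since $\delta_0$ is the quantity you are choosing.) The claimed cumulative bound $E_j<j\delta+j\,\epsilon/(2N)$ does not follow from your recursion either: the recursion gives a \emph{fixed} bound $\delta+\epsilon/(2N)$ at each step (provided the threshold is met), not an additive one.

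The clean fix is to telescope in the other direction, which is precisely what \eqref{u} with $k=N$ is set up for. Write $h=s_m\cdots s_1$ and set $a_j=\varphi(s_m\cdots s_{j+1},\,f(s_j\cdots s_1 g))$ for $0\le j\le m$, so $a_0=\varphi(h,f(g))$ and $a_m=f(hg)$. Then
\[
d(a_j,a_{j-1})=d\bigl(\varphi(s_m\cdots s_{j+1},\,f(s_j h_{j-1}g)),\ \varphi(s_m\cdots s_{j+1},\,\varphi(s_j,f(h_{j-1}g)))\bigr).
\]
The inner distance $d(f(s_j h_{j-1}g),\varphi(s_j,f(h_{j-1}g)))$ is $<\delta$ by the pseudo-orbit condition, and $\|s_m\cdots s_{j+1}\|_S\le m-j<N$, so \eqref{u} gives $d(a_j,a_{j-1})<\epsilon/N$. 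Summing over $j=1,\dots,m$ yields $d(f(hg),\varphi(h,f(g)))<m\cdot\epsilon/N<\epsilon$, with no running threshold to maintain. This is the argument the paper has in mind.
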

Subset $H\subseteq G$ is syndetic if there is finite set $F\subseteq
G$ such that $G=FH$. Hence a subgroup $H$ is syndetic in $G$ if and
only if it is finite index subgroup of $G$ i.e. there is finite set
$\{g_i\}_{i=1}^n$ such that $G= \bigcup_{i=1}^n g_iH$.
\begin{proposition}\label{syndetic}
Let $H$ be a finite index subgroup of $G$. Then continuous action
$\varphi:G\times X\to X$ has persistent shadowing property if
$\varphi:H\times X\to X$ has persistent shadowing property.
\end{proposition}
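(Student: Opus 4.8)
The plan is to transfer persistent shadowing from $H$ to $G$ by choosing coset representatives and reorganizing a pseudo-orbit of $G$ into one for $H$. Fix coset representatives $g_1 = e, g_2, \dots, g_n$ with $G = \bigcup_{i=1}^n g_i H$, and set $N = 1 + \max_i \|g_i\|_S$ and $M = \max_i \|g_i^{-1}\|_S$. Given $\epsilon > 0$, first use the uniform continuity estimate \eqref{u} (applied with the word-length bound $M$) to get $\epsilon' > 0$ such that $d(x,y) < \epsilon'$ implies $d(\varphi(g_i^{-1}, x), \varphi(g_i^{-1}, y)) < \epsilon$ for all $i$; shrink further so $\epsilon' < \epsilon$. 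Let $\delta' > 0$ be the persistent-shadowing modulus for $\varphi: H \times X \to X$ corresponding to $\epsilon'$. Now choose $\delta > 0$ small: small enough that (i) $d_S(\varphi,\psi) < \delta$ forces $d_{S_H}(\varphi|H, \psi|H) < \delta'$ for the chosen generating set $S_H$ of $H$ (each generator of $H$ is a bounded word in $S$, so apply Lemma \ref{pseudo}-type triangle-inequality control together with \eqref{u}), and (ii) by Lemma \ref{pseudo}, any $\delta$-pseudo-orbit $f: G \to X$ for $\psi$ satisfies $d(f(hg), \psi(h, f(g))) < \delta'$ whenever $\|h\|_S < N$.

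Now take $\psi$ with $d_S(\varphi,\psi) < \delta$ and a $\delta$-pseudo-orbit $f: G \to X$ for $\psi$. Define $\bar f: H \to X$ by $\bar f(h) = f(h)$, i.e. the restriction of $f$ to $H$. Because every generator $s_H$ of $H$ has $\|s_H\|_S < N$, condition (ii) shows $\bar f$ is a $\delta'$-pseudo-orbit for $\psi|H: H \times X \to X$ (here using $d(f(s_H h), \psi(s_H, f(h))) < \delta'$, and that $\psi|H$ differs from $\varphi|H$ by less than $\delta'$, which is absorbed in the choice; one must be slightly careful and instead take $\bar f$ to be a pseudo-orbit directly for $\psi|H$ with the generating set $S_H$, which is exactly what (ii) gives). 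By persistent shadowing of $\varphi: H \times X \to X$, there is $p \in X$ with $d(\bar f(h), \psi(h, p)) < \epsilon'$ for all $h \in H$.

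It remains to upgrade this to all of $G$. For arbitrary $g \in G$, write $g = g_i h$ with $h \in H$, so $h = g_i^{-1} g$. Then
\begin{align*}
d(f(g), \psi(g, p)) &\le d(f(g_i h), \psi(g_i, f(h))) + d(\psi(g_i, f(h)), \psi(g_i, \psi(h, p))).
\end{align*}
The first term is $< \delta' < \epsilon'$ by (ii), since $\|g_i\|_S < N$. For the second term, $d(f(h), \psi(h,p)) < \epsilon'$ from the shadowing on $H$, and applying the uniform-continuity estimate for $\psi$ (with word-length bound $\|g_i\|_S < N$; note \eqref{u} holds for $\psi$ as well since $d_S(\varphi,\psi)$ is small, or one simply invokes it for the finitely many $g_i$ after another $\delta$-reduction) bounds it by $\epsilon$. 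Adjusting constants from the start (replace $\epsilon$ by $\epsilon/3$ throughout) gives $d(f(g), \psi(g,p)) < \epsilon$ for all $g \in G$, which is persistent shadowing for $\varphi: G \times X \to X$.

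The main obstacle is bookkeeping the moduli of continuity so that the estimate survives applying group elements of bounded length — in particular, ensuring the uniform-continuity control \eqref{u} is available for the perturbed action $\psi$ and not merely for $\varphi$. This is handled by first shrinking $\delta$ so that $\psi$ stays within a controlled neighborhood of $\varphi$ on the finite "window" of word-length $< N$, a standard equicontinuity-on-a-finite-set argument; once that is in place the rest is the triangle-inequality computation sketched above.
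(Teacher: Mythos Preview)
Your proposal is correct and follows essentially the same route as the paper: restrict the $G$-pseudo-orbit to $H$, invoke persistent shadowing for $\varphi|_H$ to obtain a tracking point $p$, and then extend the estimate to every $g=g_ih$ via Lemma~\ref{pseudo}-type control on words of bounded length together with uniform continuity of the (perturbed) action on the finitely many coset representatives. The only cosmetic slips are that your choice of $\epsilon'$ should govern $\psi(g_i,\cdot)$ rather than $\varphi(g_i^{-1},\cdot)$, and your $N$ must also dominate the $S$-lengths of the chosen generators of $H$; both are absorbed by enlarging the word-length bound, exactly as the paper does by taking $A\subseteq S$.
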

\begin{proof}
Let $H$ be finite index subgroup of $G$. Let $A$ be a symmetric
finitely generating set of $H$. We can add more elements to $A$ to
get a symmetric finitely generating set $S$ of $G$.  Also let $G=
\bigcup_{i=1}^n g_iH$ and $N=\max \{ ||g_i||_S: 1\leq i \leq n\}$.

 Let $\epsilon>0$ be given. Choose $\delta>0$ such that  for all $g\in G$ with $||g||_S<N$
\begin{align}\label{k}
d(x, y)<\delta\Rightarrow d(\varphi(g, x), \varphi(g,
y))<\frac{\epsilon}{N}
\end{align}
Choose $\eta>0$ corresponding to $\delta>0$ and $N\in\mathbb{N}$
that satisfies Lemma\ref{pseudo}.\\
 Let $\epsilon>0$ be given. By triangle inequality, it is easy to see that for $\epsilon>0$ and $N$ above, there is $\eta>0$
 such that if $d_S(\varphi, \psi)<\eta$, and $d(a, b)<\eta$, then

 \begin{equation}\label{b}
     d(\varphi(g, a), \psi(g, b))<\frac{\epsilon}{N},  \forall g\in G \text{ with } ||g||_{S} < N.
 \end{equation}
 and
 \begin{equation}\label{pl}
d(\psi(g, a), \psi(g, b))<\frac{\epsilon}{N}, \text{ for all }
||g||_S<N.
 \end{equation}
Let $\varphi:H\times X\to X$ has persistent shadowing property, we
show that $\varphi:G\times X\to X$ has persistent shadowing
property. Let $\epsilon>0$ be given. Choose $\delta>0$ that
satisfies Lemma \ref{pseudo} and Relation \ref{b}, Relation
\ref{pl}. Choose $\eta>0$ corresponding $\frac{\delta}{2}>0$  by
definition of persistent shadowing property of $\varphi|H$. We show
that every $\eta$-pseudo orbit $F:G\to X$ for continuous action
$\psi:G\times X\to X$ with $d_S(\varphi, \psi)<\eta$ can
$\psi$-shadowed by a point of $X$. The map $F:G\to X$ is
$2\eta$-pseudo orbit for $\varphi:G\times X\to X$, hence
    \begin{equation}\label{kl}
      d(F(gh), \varphi(g, F(h)))<\frac{\epsilon}{N}, \text{  for all } g \in G \text{ with } ||g||_S<N.
    \end{equation}
    Since $F:H\to X$ is $\eta$-pseudo orbit for $\psi:H\times X\to X$ with $d_A(\varphi, \psi)<\eta$,
    hence by persistent shadowing property of $\varphi|H$ there is $p\in X$ such that $d(F(h), \psi(h, p))<\frac{\delta}{2}$.
     Also by Relation \ref{pl} we have $d(\varphi(g_i, F(h)), \psi(g_i, \psi(h, p)))<\frac{\epsilon}{N}$. Hence by Relation \ref{kl}, we have
      $d(F(g_ih), \psi(g_ih, p))<\epsilon$ i.e. $d(F(g), \psi(g, p))<\epsilon$ for all $g\in G$.

\end{proof}
\begin{remark}\label{224}
Let $P$ be one of the following property:  $(a)$ shadowing property,
$(b)$ $\alpha$-persistent, $(c)$ $\beta$-persistent. Similar to
proof of Proposition \ref{syndetic}, if $H\leq G$ is syndetic subset
of $G$ and continuous action $\varphi:H\times X\to X$ does have $P$-
property, then $\varphi:G\times X\to X$ has $P$-property.
\end{remark}
\subsection{ The Action Of Free Groups}\label{s21}
Group  $BS(1, n)$ is solvable group, hence it is not free group. In
the case of finitely free group actions $G$ , by \cite[Theorem
4.9.]{osipov}, if $\varphi:G\times X\to X$ has shadowing property,
then $\varphi_g:X\to X$ has shadowing property, for all $g\in G$. In
the following, we extend it in the case of  persistent shadowing
property.
\begin{proposition}\label{op2}
Let $\varphi:G\times X\to X$ be a continuous action of free group
$F_2=\langle a, b\rangle$ on compact metric space $(X, d)$.
 If $\varphi:F_2\times X\to X$ has persistent shadowing property, then $\varphi_{a^{-1}b}:X\to X$ has persistent shadowing property.

\end{proposition}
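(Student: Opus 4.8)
The plan is to transfer a persistent-shadowing datum for the single map $h := \varphi_{a^{-1}b}$ to the group action $\varphi$, run the group-action persistent shadowing property, and then restrict the shadowing orbit back to the cyclic subgroup generated by $a^{-1}b$. The key device is to encode an orbit (or pseudo-orbit) of a perturbation $g : X \to X$ of $h$ as a pseudo-orbit of a suitable perturbation $\psi$ of $\varphi$ along the infinite cyclic subgroup $\langle a^{-1}b\rangle \le F_2$, using the fact that $F_2$ is free, so $\langle a^{-1}b\rangle$ is free of rank one and the word $(a^{-1}b)^k$ has a controlled geodesic form in the generating set $S = \{a^{\pm 1}, b^{\pm 1}\}$.

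First I would fix $\epsilon > 0$ and let $\delta_0 > 0$ be the persistent-shadowing modulus of $\varphi : F_2 \times X \to X$ for this $\epsilon$; I also invoke Lemma~\ref{pseudo} and the uniform-continuity estimate \eqref{u} (with $k = 2$, since $a^{-1}b$ has $S$-length $2$) to produce a smaller $\delta > 0$ controlling how errors propagate across a single application of $\varphi_{a^{-1}}$ or $\varphi_b$. Now suppose $g : X \to X$ is a homeomorphism with $d_{C^0}(g, h) < \delta$ and $f : \mathbb{Z} \to X$ is a $\delta$-pseudo orbit for $g$, i.e. $d(f(k+1), g(f(k))) < \delta$. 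I want a perturbation $\psi : F_2 \times X \to X$ and a $\delta_0$-pseudo orbit $F : F_2 \to X$ for $\psi$ so that the restriction of a $\psi$-shadowing orbit to $\langle a^{-1}b\rangle$ yields an $\epsilon$-shadowing $\varphi$-orbit — equivalently, a $(g,\epsilon)$-shadowing point. The natural choice: define $\psi_a = \varphi_a$ and $\psi_b := \varphi_a \circ g \circ \varphi_a^{-1} \circ \varphi_b^{-1} \circ \varphi_b = \varphi_a \circ g \circ \varphi_b^{-1}$; then $\psi_{a^{-1}b} = \psi_a^{-1}\psi_b = g$, and $d_S(\varphi,\psi) = d(\varphi_b, \psi_b) = \sup_x d(\varphi_b(x), \varphi_a(g(\varphi_b^{-1}(x))))$, which is small when $g$ is close to $h = \varphi_{a^{-1}b} = \varphi_a^{-1}\varphi_b$, after composing with the uniformly continuous map $\varphi_a$ — so shrink $\delta$ accordingly.

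Next, from the $\mathbb{Z}$-indexed pseudo orbit $f$ for $g = \psi_{a^{-1}b}$ I build an $F_2$-indexed pseudo orbit $F$ for $\psi$. Enumerate $F_2$ and, for each $w \in F_2$, push the value $f(k)$ (for the appropriate $k$) forward along the unique geodesic from the "$\langle a^{-1}b\rangle$-coset representative" to $w$ using $\psi$; on the subgroup $\langle a^{-1}b\rangle$ itself set $F((a^{-1}b)^k) = f(k)$, noting that $(a^{-1}b)^k$ written in $S$ alternates $a^{-1}, b, a^{-1}, b, \dots$ so consecutive group elements of the form $(a^{-1}b)^k$ and $(a^{-1}b)^{k+1}$ differ by the two generators $a^{-1}$ then $b$, forcing me to interpolate one auxiliary point $\varphi_{a^{-1}}(f(k))$ (or more carefully $\psi_{a^{-1}}(f(k)) = \varphi_{a^{-1}}(f(k))$) — here the estimate \eqref{u} guarantees the pseudo-orbit inequality for the "$b$-step" once the "$a^{-1}$-step" is exact. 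One checks $F$ is a $\delta_0$-pseudo orbit for $\psi$ with $d_S(\varphi,\psi)<\delta_0$. Applying persistent shadowing of $\varphi$ yields $p \in X$ with $d(F(w), \psi(w,p)) < \epsilon$ for all $w$; restricting to $w = (a^{-1}b)^k$ gives $d(f(k), g^k(p)) < \epsilon$ for all $k \in \mathbb{Z}$, i.e. $p$ $(g,\epsilon)$-shadows $f$. This is exactly persistent shadowing for $\varphi_{a^{-1}b}$; invoking \cite{jung} (equivalently, just the definition) finishes.

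The main obstacle is the bookkeeping in the interpolation step: the subgroup $\langle a^{-1}b\rangle$ is \emph{not} generated by a single element of $S$, so a one-step move in $\mathbb{Z}$ is a two-letter move in $F_2$, and I must insert the intermediate vertex $\varphi_{a^{-1}}(f(k))$ and verify both half-steps satisfy the pseudo-orbit bound under $\psi$ (not $\varphi$) — this is where freeness is used twice over: once so that $(a^{-1}b)^k$ has no cancellation and the chosen geodesics are unambiguous, and once so that $\psi$ can be freely prescribed on the generators $a, b$ without consistency relations to check. I expect the rest (choice of moduli, uniform continuity bookkeeping) to be routine given Lemma~\ref{pseudo} and \eqref{u}.
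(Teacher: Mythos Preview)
Your approach is essentially the same as the paper's: the paper also sets $\psi_a=\varphi_a$ and $\psi_b=\varphi_a\circ g$ (so that $\psi_{a^{-1}b}=g$; your displayed formula $\psi_b=\varphi_a\circ g\circ\varphi_b^{-1}$ contains an arithmetic slip, but your stated conclusion $\psi_{a^{-1}b}=g$ and the $d_S$-estimate are the correct ones), then defines the $F_2$-pseudo orbit by $K(t)=\psi(v,x_k)$ where $t=v(a^{-1}b)^k$ with $v$ of minimal $S$-length, and applies persistent shadowing of $\varphi$ to obtain the $(g,\epsilon)$-shadowing point. The interpolation and coset-representative bookkeeping you describe is exactly this construction.
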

\begin{proof}
  Let $\epsilon>0$ be given. Choose $\epsilon_0>0$ corresponding to $\epsilon>0$ by persistent shadowing property of $\varphi:G\times X\to X$. For $\epsilon_0>0$ there is $\delta>0$ such that for every continuous action $\psi:F_2\times X\to X$ that is $\delta$-close to $\varphi:G\times X\to X$, we have
        \begin{equation*}
          d(x, y)<\delta\Rightarrow d(\psi(g, x), \psi(g, y))<\epsilon, |g|_S\leq 2,
        \end{equation*}
        Let $\{x_n\}_{n\in\mathbb{Z}}$ be $\delta$ pseudo orbit of homeomorphism $f:X\to X$ wit $d(f, \varphi_{a^{-1}}\varphi_b)<\delta$.
         It is easy to see that if $\psi:F_2\times X\to X$ is generated by $\psi_a=\varphi_a$ and $\psi_b= \varphi_a\circ f$, then $\psi:F_2\times X\to X$
         is $\epsilon_0$- close to $\varphi:F_2\times X\to X$. Define $K: F_2\to X$ define by $K(t)= \psi(v, x_k)$ where $v\in G$ is an element with minimal
         length such that $t=v(a^{-1}b)^k$ for some $k\in \mathbb{Z}$. It is not hard to see that $K:G\to X$ is $\epsilon_0$- pseudo orbit of continuous
          action $\psi:F_2\times X\to X$ with $d(\varphi, \psi)<\epsilon_0$. By persistent shadowing property, there is $y\in X$ such that
          $d(K(g), \psi(g, y))<\epsilon$ for all $g\in G$. Since $K((a^{-1}b)^k)= x_k$ and $\psi((a^{-1}b)^k, y)= f^k(y)$, we have $d(f^k(y), x_k)<\epsilon$ for
          all $k\in \mathbb{Z}$.

\end{proof}

\begin{remark}\label{225}
Let $P$ be one of the following property:  $(a)$ shadowing property,
$(b)$ $\alpha$-persistent, $(c)$ $\beta$-persistent. Similar to
proof of Proposition \ref{op2}, we can show that if continuous
action $\varphi:F_2\times X\to X$ of free group $F_2=\langle a,
b\rangle$ on compact metric space $(X, d)$ has $P$- property, then
$\varphi_{a^{-1}b}:X\to X$ has $P$- property.
\end{remark}
\subsection{Persistent shadowing property and related
measure}\label{s22}
 For continuous action $\varphi:G\times X\to X$,
$\epsilon>0$, $\delta>0$ and generating set $S$, we denote
$PSh_{\varphi, S}(\delta, \epsilon)$  the set of all $x\in X$ such
that every $\delta$-pseudo orbit $f:G\to X$ of continuous action
$\psi$ with $d_S(\varphi, \psi)<\delta$ and $f(e)=x$, respect to
generating set $S$, can be $(\epsilon, \psi)$-shadowed by a point in
$X$.

It is clear that
\begin{enumerate}
\item Let $S, S'$ be generating sets for $G$ and $\epsilon>0$ be given. Then for every $\delta>0$ there is $\eta>0$ such that $PSh_{\varphi, S}(\eta, \epsilon)\subseteq PSh_{\varphi, S'}(\delta, \epsilon)$
  \item Continuous action $\varphi:G\times X\to X$ does have persistent shadowing property with respect to generating set $S$ if and only if for every $\epsilon>0$ there is $\delta>0$ such that $PSh_{\varphi, S}(\delta, \epsilon)=X$.
      \item If continuous action $\varphi:G\times X\to X$ does have persistent shadowing property on compact set $K\subseteq X$, then for every $\epsilon>0$ there exist a neighborhood $U$ of $K$ and  $\delta>0$ such that $U\subseteq PSh_{\varphi, S}(\delta, \epsilon)$.
          \item $PSh_{\varphi, S}(\delta, \epsilon)$ is a closed subset of $X$.
\end{enumerate}
The Borel $\sigma$-algebra of $X$ is the $\sigma$-algebra
$\mathcal{B}(X)$ generated by the open subsets of $X$.
 A Borel probability measure is a $\sigma$-additive measure $\mu$ defined in $\mathcal{B}(X)$ such that $\mu(X)=1$.
 We denote by $\mathcal{M}(X)$ the set of all Borel probability measures of $X$. This set is convex and  compact metrizable if it endowed with weak$^*$ topology:
 the one ruled by the convergence $\mu_n\to \mu$ if and only if $\int f d\mu_n\to \int f d\mu$ for every continuous map $f:X\to \mathbb{R}$.\\
\begin{definition}\label{plr}
 A measure $\mu\in\mathcal{M}(X)$ is compatible with the persistent
shadowing property  for the continuous action $\varphi:G\times X\to
X$, $\mu\in \mathcal{M}_{PSh}(X, \varphi)$ if for every $\epsilon>0$
there is $\delta>0$ such that if $\mu(A)>0$, then
\begin{equation*}
A\cap Sh_{\psi}(\delta, \epsilon)\neq \emptyset.
\end{equation*}
for every continuous action $\psi:G\times X\to X$ with
$d_S(\varphi,\psi)<\delta$.

\end{definition}

%
\begin{example}\label{exam1}
Let  continuous action $\varphi:G\times X\to X$ admits  an
$\varphi$-invariant measure and let $\varphi$  does have  persistent
shadowing property on the non-wandering set $\Omega(\varphi)$. Then
every $\varphi$-invariant Borel probability measure $\mu$ on $X$ is
compatible with the property of persistent shadowing property.
 \end{example}

 Let $\mu\in\mathcal{M}_{PSh}(X, \varphi)$ , $\epsilon>0$ and $h:(X, d)\to (Y, \rho)$ be a homeomorphism. We will show that there is $\delta>0$ such that
\begin{equation}
h_{*}(\mu)(B)>0 \Rightarrow B\cap PSh_{h\circ \varphi \circ
h^{-1}}(\delta, \epsilon)\neq \emptyset.
\end{equation}
For $\epsilon>0$ there is $\epsilon_0>0$ such that
\begin{equation}\label{rty}
d(a, b)<\epsilon_0\Rightarrow \rho(h(a), h(b))<\epsilon.
\end{equation}
For $\epsilon_0>0$ there is $\epsilon_1>0$ in definition of
$\mu\in\mathcal{M}_{PSh}(X, \varphi)$. Since $\mu(h^{-1}(B))>0$,
hence $h^{-1}(B)\cap \mathcal{M}_{PSh}(\epsilon_1, \epsilon_0)\neq
\emptyset$. For $\epsilon_1>0$ there is $\delta>0$ such that
\begin{equation}
\rho(c, d)<\delta\Rightarrow d(h^{-1}(c), h^{-1}(d))<\epsilon_1.
\end{equation}
 Fix $x\in h^{-1}(B)\cap PSh_\varphi(\epsilon_1, \epsilon_0)$. One can check that if
 $F':G\to Y$ be $\delta$-pseudo orbit of continuous action
 $\psi':G\times Y\to Y$ with $\rho_S(f\circ \varphi\circ f^{-1},
 \psi')<\delta$ and $F'(e)=h(x)$, then $F:G\to X$ defined by
 $F(g)=h^{-1}(F'(g))$ is $\epsilon_1$-pseudo orbit of continuous
 action $h^{-1}\circ \psi'\circ h $ with $d_S(\varphi,
 h^{-1}\circ \psi'\circ h)<\epsilon_1$ and $F(e)=x$. By $x\in h^{-1}(B)\cap PSh_\varphi(\epsilon_1,
 \epsilon_0)$, there is $p\in X$ such that $d(F(g), h^{-1}\circ
 \psi'_g\circ h(p))<\epsilon$ for all $g\in G$. By relation
 \ref{rty}, we have $d(h\circ F(g), \psi'_g(h(p)))<\epsilon$ i.e.
 $d(F'(g), \psi'_g(h(p)))<\epsilon$. This means that $h(x)\in B\cap
 PSh_{h\circ \varphi \circ h^{-1}}(\delta, \epsilon)$.
\begin{itemize}
\item  If $h:(X, d)\to(Y, \rho)$ is a homeomorphism and $\mu\in  \mathcal{M}_{PSh}(X, \varphi)$, then $h_{*}(\mu)\in \mathcal{M}_{PSh}(Y, h\circ \varphi\circ h^{-1})$,
\item If $G$ is an abelian group, then $\mathcal{M}_{PSh}(X, \varphi)$ is $(\varphi_g)_*$-invariant, for all $g\in G$.
\end{itemize}
It is known that every continuous action of a countable abelian group on compact metric space  admit  an $\varphi$-invariant measure. Also, One can check that  $\mathcal{M}_{PSh}(X, \varphi)$ is a convex  subset of $\mathcal{M}(X)$, hence by Proposition 2.12
in \cite{ali2}, the following item does hold.
\begin{itemize}
\item If $G$ is  an abelian group, then $\overline{\mathcal{M}_{PSh}(X, \varphi)}$ contains
a $\varphi$-invariant measure.
\end{itemize}
It is known that a  group action $\varphi:G\times X\to X$ on the
compact metric space $X$ has a $\varphi$- invariant Borel
probability measure on $X$ if and only if $G$ is amenable, see
\cite{1}. Hence, the following group action where  $G=SL(2, \mathbb{Z})$ and $X=\mathbb{R}\cup\{\infty\}$ does not contan any invarant meausre.\\
 
\begin{equation*}
  \varphi
  (\left(
  \begin{array}{ll}
  a & b\\
  c & d
  \end{array}
  \right),
  z)=\frac{az+b}{cz+d}.
\end{equation*}

%

 Take $$\mathcal{C}_{PSh(\varphi)}(\delta,
 \epsilon)=\{\mu\in\mathcal{M}(X): \mu(PSh_\varphi(\delta,
 \epsilon))=1\}.$$ It is easy to see that $\mathcal{C}_{PSh(\varphi)}(\delta,
 \epsilon)$ is a convex and closed subset of $\mathcal{M}(X)$ for
 every $\epsilon>0$ and any $\delta>0$.
 \begin{proposition}\label{kj}
 Let $\varphi:G\times X\to X$ be a continuous action. Then

 \begin{enumerate}
    \item $ \mathcal{M}_{PSh}(X, \varphi)=\bigcap_{n\in\mathbb{N}}(\bigcup_{m\in\mathbb{N}}(\bigcap_{l\in\mathbb{N}}C_{PSh(\varphi)}(n^{-1}+l^{-1}, m^{-1})))$

    \item The subset $\mathcal{M}_{PSh}(X, \varphi)$ is an
    $F_{\sigma\delta}$ subset of $\mathcal{M}(X)$.
 \end{enumerate}
 \end{proposition}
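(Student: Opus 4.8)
The plan is to turn the measure-theoretic condition in Definition~\ref{plr} into a statement about full measure of closed sets and then discretise the two real parameters. The only non-trivial ingredient at the first step is the elementary remark that, for a closed set $K\subseteq X$ and $\mu\in\mathcal{M}(X)$, the condition ``$\mu(A)>0\Rightarrow A\cap K\neq\emptyset$ for every Borel set $A$'' is equivalent to $\mu(K)=1$: if $\mu(K)=1$ and $A\cap K=\emptyset$ then $A\subseteq X\setminus K$, so $\mu(A)\le\mu(X\setminus K)=0$; conversely, applying the hypothesis to the Borel set $A=X\setminus K$ forces $\mu(X\setminus K)=0$. Since every $PSh_{\varphi,S}(\delta,\epsilon)$ is closed (item~(4) in the list preceding Definition~\ref{plr}), and since the quantifier over perturbations $\psi$ with $d_S(\varphi,\psi)<\delta$ is exactly the one built into $PSh_{\varphi,S}(\delta,\epsilon)$, this remark shows that $\mu\in\mathcal{M}_{PSh}(X,\varphi)$ if and only if for every $\epsilon>0$ there is $\delta>0$ with $\mu\bigl(PSh_{\varphi,S}(\delta,\epsilon)\bigr)=1$, i.e.\ with $\mu\in\mathcal{C}_{PSh(\varphi)}(\delta,\epsilon)$.

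For the passage to the countable formula in item~(1) I would first record the monotonicity of these sets, which is immediate from the definitions: $PSh_{\varphi,S}(\delta,\epsilon)$, and hence $\mathcal{C}_{PSh(\varphi)}(\delta,\epsilon)$, is non-increasing in the pseudo-orbit/perturbation modulus (a $\delta$-pseudo orbit of a $\delta$-perturbation is also a $\delta'$-pseudo orbit of a $\delta'$-perturbation whenever $\delta\le\delta'$) and non-decreasing in the shadowing accuracy. The existential quantifier ``there is $\delta>0$'' can then be replaced, by monotonicity, by ``there is $m\in\mathbb{N}$'' taking the modulus to be $m^{-1}$, which produces the union $\bigcup_{m\in\mathbb{N}}$. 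The universal quantifier ``for every $\epsilon>0$'' is replaced by the outer intersection $\bigcap_{n\in\mathbb{N}}$ together with the inner intersection $\bigcap_{l\in\mathbb{N}}$ over the values $n^{-1}+l^{-1}$: the point is that one cannot simply put the accuracy equal to $n^{-1}$, because the shadowing inequality is strict, so a compactness argument in $X$ (extracting a limit point of the shadowing orbits) only returns the non-strict inequality; intersecting over $l$, i.e.\ over accuracies decreasing to $n^{-1}$, restores what is needed, and the resulting expression is exactly $\bigcap_{n}\bigcup_{m}\bigcap_{l}\mathcal{C}_{PSh(\varphi)}(n^{-1}+l^{-1},m^{-1})$. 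The step I expect to be the main obstacle is checking that all of these replacements are genuinely reversible — that a measure lying in the right-hand set satisfies Definition~\ref{plr} for every real $\epsilon>0$, not merely along the chosen sequences — and the monotonicity in both variables is precisely what is used to make this work in both directions.

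Item~(2) then follows at once. Each $\mathcal{C}_{PSh(\varphi)}(\delta,\epsilon)$ is a closed subset of $\mathcal{M}(X)$ (as already observed before Proposition~\ref{kj}; alternatively, if $\mu_j\to\mu$ in the weak$^*$ topology with $\mu_j\bigl(PSh_{\varphi,S}(\delta,\epsilon)\bigr)=1$, then the Portmanteau theorem gives $\mu\bigl(PSh_{\varphi,S}(\delta,\epsilon)\bigr)\ge\limsup_j\mu_j\bigl(PSh_{\varphi,S}(\delta,\epsilon)\bigr)=1$ because $PSh_{\varphi,S}(\delta,\epsilon)$ is closed). Consequently, for fixed $n$ and $m$ the set $\bigcap_{l\in\mathbb{N}}\mathcal{C}_{PSh(\varphi)}(n^{-1}+l^{-1},m^{-1})$ is closed; the union over $m$ of such sets is an $F_\sigma$ subset of $\mathcal{M}(X)$; and the intersection over $n$ of these $F_\sigma$ sets is an $F_{\sigma\delta}$ set. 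Hence $\mathcal{M}_{PSh}(X,\varphi)$ is an $F_{\sigma\delta}$ subset of $\mathcal{M}(X)$.
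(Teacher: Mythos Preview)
Your proof is correct and follows essentially the same approach as the paper: reformulate $\mu\in\mathcal{M}_{PSh}(X,\varphi)$ as the condition that the closed sets $PSh_{\varphi,S}(\delta,\epsilon)$ have full measure, then discretise both parameters using monotonicity in $\delta$ and $\epsilon$. Your treatment is in fact more explicit than the paper's, which uses the equivalence $\mu(K)=1\Leftrightarrow(\mu(A)>0\Rightarrow A\cap K\neq\emptyset)$ and the monotonicity of $\mathcal{C}_{PSh(\varphi)}(\delta,\epsilon)$ without stating them.
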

 \begin{proof}
 \begin{enumerate}
 \item Fix a $\mu\in\mathcal{M}_{PSh}(X, \varphi)$ and an $n\in\mathbb{N}$.
Choose a $\delta>0$ such that if $\mu(A)>0$ then $A\cap
PSh_\varphi(\delta, \frac{1}{n})\neq \emptyset$.Choose
$m\in\mathbb{N}$ such that $m^{-1}<\delta$. Note that if $A\cap
PSh_\varphi(\delta, \frac{1}{n})\neq \emptyset$, then $A\cap
PSh_\varphi(\frac{1}{m}, \frac{1}{n})\neq \emptyset$. This implies
that
$\mu\in\bigcup_{m\in\mathbb{N}}(\bigcap_{l\in\mathbb{N}}C_{PSh(\varphi)}(
m^{-1}, n^{-1}+l^{-1})))$. Conversely choose a
$\mu\in\bigcap_{n\in\mathbb{N}}(\bigcup_{m\in\mathbb{N}}(\bigcap_{l\in\mathbb{N}}C_{PSh(\varphi)}(
m^{-1}, n^{-1}+l^{-1})))$. Thus, for every $n\in\mathbb{N}$, there
is $k\in\mathbb{N}$ such that $\mu\in
\bigcap_{l\in\mathbb{N}}C_{PSh(\varphi)}(\frac{1}{k}, \frac{1}{n}+
\frac{1}{l})$. This means that for every $n\in\mathbb{N}$ there is
$k\in\mathbb{N}$ such that $\mu\in
\bigcap_{l\in\mathbb{N}}C_{PSh(\varphi)}(\frac{1}{k}, \frac{1}{n}+
\frac{1}{l})$. This implies that for every $\epsilon>0$ there exist
$N, K\in\mathbb{N}$ such that $(\frac{1}{N}+\frac{1}{K})<\epsilon$
and $\mu\in C_{PSh(\varphi)}(\frac{1}{N}+\frac{1}{K}, \frac{1}{K})$.
Therefor, for every $\epsilon>0$ choose $\delta=\frac{1}{K}$ to
conclude then $\mu\in\mathcal{M}_{PSh(\varphi)}(X)$.
\item Since $C_{PSh(\varphi)}(
m^{-1}, n^{-1}+l^{-1})))$ is a closed subset of $\mathcal{M}(X)$ and
a countable intersection of closed sets is closed, hence we can say
that  $(\bigcap_{l\in\mathbb{N}}C_{PSh(\varphi)}( m^{-1},
n^{-1}+l^{-1})))$ is a closed subset of $\mathcal{M}(X)$ for every
pair $m, n\in\mathbb{N}$. Therefor
$\bigcup_{m\in\mathbb{N}}(\bigcap_{l\in\mathbb{N}}C_{PSh(\varphi)}(
m^{-1}, n^{-1}+l^{-1})))$ is an $F_{\sigma\delta}$ subset of
$\mathcal{M}(X)$, for every $n\in\mathbb{N}$ and hence by item (1),
$\mathcal{M}_{PSh}(X,\varphi)$ is an $F_{\sigma\delta}$  subset of
$\mathcal{M}(X)$.
\end{enumerate}
 \end{proof}


\begin{proposition}\label{pki}
  Let $\varphi:G\times X\to X$ be a continuous action on compact metric space $(X, d)$ and $\mu\in\mathcal{M}_{PSh}(\varphi)$.
  Then $\varphi$ has persistent shadowing property on $supp(\mu)$.
\end{proposition}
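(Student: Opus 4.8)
The plan is to read off, directly from Definition \ref{plr}, that for every $\epsilon>0$ the very same $\delta>0$ which witnesses $\mu\in\mathcal{M}_{PSh}(X,\varphi)$ also witnesses the inclusion $supp(\mu)\subseteq PSh_{\varphi,S}(\delta,\epsilon)$, which is exactly what it means for $\varphi$ to have the persistent shadowing property on $supp(\mu)$ (compare the third item in the list following the definition of $PSh_{\varphi,S}$, which is phrased in these terms). Two facts recorded just above do all the work: $PSh_{\varphi,S}(\delta,\epsilon)$ is a closed subset of $X$, and every open set meeting $supp(\mu)$ has positive $\mu$-measure.

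Concretely, I would fix $\epsilon>0$ and take $\delta>0$ as in Definition \ref{plr}, so that $\mu(A)>0$ forces $A\cap PSh_{\varphi,S}(\delta,\epsilon)\neq\emptyset$ for every $\psi$ with $d_S(\varphi,\psi)<\delta$. Let $x\in supp(\mu)$. For each $k\in\mathbb{N}$ the ball $B(x,1/k)$ has positive $\mu$-measure, hence contains a point $x_k\in PSh_{\varphi,S}(\delta,\epsilon)$; since $x_k\to x$ and $PSh_{\varphi,S}(\delta,\epsilon)$ is closed, $x\in PSh_{\varphi,S}(\delta,\epsilon)$. Thus $supp(\mu)\subseteq PSh_{\varphi,S}(\delta,\epsilon)$, and unwinding the definition of this set, every $\delta$-pseudo orbit $f:G\to X$ of a continuous action $\psi$ with $d_S(\varphi,\psi)<\delta$ and $f(e)\in supp(\mu)$ can be $(\psi,\epsilon)$-shadowed by a point of $X$; as $\epsilon>0$ was arbitrary, this is precisely the persistent shadowing property on $supp(\mu)$. (One may also phrase the middle step as a contradiction: an $x\in supp(\mu)$ lying outside the closed set $PSh_{\varphi,S}(\delta,\epsilon)$ would have an open ball around it disjoint from that set yet of positive $\mu$-measure, contradicting the choice of $\delta$.)

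I do not anticipate any real obstacle: the entire argument is the translation, mediated by the closedness of $PSh_{\varphi,S}(\delta,\epsilon)$, of the measure-theoretic condition ``meets every Borel set of positive measure'' into the topological condition ``contains the support''. The only subtlety worth keeping in mind is that the set $PSh_{\varphi,S}(\delta,\epsilon)$ already quantifies over all $\psi$ with $d_S(\varphi,\psi)<\delta$, so the conclusion is genuinely perturbation-robust shadowing on $supp(\mu)$ and not merely shadowing for $\varphi$ itself; this is also what makes the subsequent remark work, since on a compact metric space without isolated points a non-atomic Borel probability measure of full support exists, so if every non-atomic measure is compatible then $supp(\mu)=X$ for such a measure and hence $PSh_{\varphi,S}(\delta,\epsilon)=X$ for a suitable $\delta$, i.e. $\varphi$ has the persistent shadowing property.
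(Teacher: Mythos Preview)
Your approach is correct and more streamlined than the paper's, though it takes a genuinely different route. The paper does not invoke the closedness of $PSh_{\varphi,S}(\delta,\epsilon)$; instead it works by hand with a given pseudo-orbit. Given $\epsilon>0$, the paper first picks $\delta$ from the compatibility hypothesis (for accuracy $\epsilon/2$), then a smaller $\eta<\delta/2$ via uniform continuity so that perturbing the initial point of an $\eta$-pseudo-orbit by $\eta$ keeps it a $\delta$-pseudo-orbit. For an $\eta$-pseudo-orbit $F$ of some $\psi$ with $F(e)=p\in supp(\mu)$, the ball $B_\eta(p)$ has positive measure, hence meets $Sh_\psi(\delta,\epsilon/2)$ at some $q$; replacing $F(e)$ by $q$ yields a $\delta$-pseudo-orbit of $\psi$ through $q$, which is $(\psi,\epsilon/2)$-shadowed, and the triangle inequality gives $(\psi,\epsilon)$-shadowing of the original $F$. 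Your argument packages exactly this pseudo-orbit-modification step into the closedness fact recorded as item~(4) just before Definition~\ref{plr}, which is why it is shorter and more conceptual; the paper's version has the advantage of being self-contained and making the $\eta$--$\delta$ bookkeeping explicit.

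One small caveat worth tightening: Definition~\ref{plr} as written uses the per-$\psi$ set $Sh_\psi(\delta,\epsilon)$, so what you literally obtain from $\mu(B(x,1/k))>0$ is, for each fixed $\psi$ with $d_S(\varphi,\psi)<\delta$, a point of $B(x,1/k)\cap Sh_\psi(\delta,\epsilon)$ --- not a single point lying in the intersection $PSh_{\varphi,S}(\delta,\epsilon)=\bigcap_\psi Sh_\psi(\delta,\epsilon)$. The fix is immediate: run your closedness argument for each $\psi$ separately (the same reasoning that makes $PSh_{\varphi,S}(\delta,\epsilon)$ closed makes each $Sh_\psi(\delta,\epsilon)$ closed) to get $x\in Sh_\psi(\delta,\epsilon)$ for every such $\psi$, hence $x\in PSh_{\varphi,S}(\delta,\epsilon)$. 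The paper's own notation wobbles on this point (compare the sets $\mathcal{C}_{PSh(\varphi)}(\delta,\epsilon)$ in Proposition~\ref{kj}, which are phrased in terms of $PSh_\varphi$), so your reading is defensible; but the per-$\psi$ formulation is what the paper's proof of the present proposition actually uses.
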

\begin{proof}
 Let $\epsilon>0$ be given. Choose $0<\delta<\frac\epsilon2$ such that for every continuous action $\psi:G\times X\to X$ with $d_S(\varphi, \psi)<\delta$ we have
  \begin{equation*}
  \mu(A)>0\Rightarrow A\cap Sh_{\delta, \frac\epsilon2}(\psi)\neq \emptyset
\end{equation*}

For $\delta>0$ there is $0<\eta<\frac\delta2$ such that for every
continuous action $\psi:G\times X\to X$ with $d_S(\varphi,
\psi)<\eta$ we have
\begin{equation*}
  d(a, b)<\eta\Rightarrow d(\psi_s(a), \psi_s(b))<\frac\delta2, \forall s\in S,
\end{equation*}
We claim that if $d_S(\varphi, \psi)<\delta$ and $F:G\to X$ is an  $\eta$-pseudo orbit for   $\psi:G\times X\to X$ with  $F(e)=p\in supp(\mu)$, then there is $z\in X$ such that $d(F(g), \psi(g, z))<\epsilon$ for all $g\in G$. \\

By $p\in supp(\mu)$ we have $\mu(B_\eta(p))>0$, this implies that
$B_\eta(p)\cap S_{\delta, \epsilon}(\psi)\neq \emptyset$ for
$\psi:G\times X\to X$. Take $q\in B_\eta(p)\cap Sh_{\delta,
\epsilon}(\psi)$ and  define $f:G\to X$ by $f(e)=q$ and $f(g)=F(g)$
for all $g\neq e$. It is easy to see that $f:G\to X$ is a
$\delta$-pseudo orbit of $\psi$. By $f(e)\in Sh_{\delta,
\epsilon}(\psi)$, there is $y\in X$ with $d(f(g), \varphi(g,
y))<\frac\epsilon2$ for all $g\in G$. This implies that $d(F(g),
\psi(g, y))<\epsilon$ for all $g\in G$.

\end{proof}

It is known that the set of Borel probability measures of a compact
metric space $X$ with support equals to $X$ is a dense $G_\delta$
subset of $\mathcal{M}(X)$, see \cite[Lemma 3.6]{baut}, also if $X$
has no isolated point,  then the non-atomic Borel probability
measures is a dense $G_\delta$ subset of $\mathcal{M}(X)$ see
\cite[Corollary 8.2]{par}, thus since $X$ is a compact space, we can
say that if $X$ is a compact space without isolated point, then the
set of non-atomic Borel probability measures with support equals to
$X$ is dense in $\mathcal{M}(X)$.  Hence by Proposition \ref{pki},
we have
\begin{corollary}
  Let $\varphi:G\times X\to X$ be a continuous action of a compact metric space $X$ without
isolated point.If every non-atomic Borel probability measure $\mu$
is compatible with persistent shadowing property for
$\varphi:G\times X\to X$, then $\varphi$ has persistent shadowing
property.
\end{corollary}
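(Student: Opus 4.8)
The plan is to deduce the corollary immediately from Proposition \ref{pki}, combined with the structural fact about $\mathcal{M}(X)$ recalled in the paragraph just preceding the statement. First I would invoke the cited results \cite[Lemma 3.6]{baut} and \cite[Corollary 8.2]{par}: since $X$ is a compact metric space with no isolated point, the set of non-atomic Borel probability measures whose support equals $X$ is nonempty (in fact dense in $\mathcal{M}(X)$, though nonemptiness is all that is needed here). Fix any such measure $\mu$, so that simultaneously $\mu$ is non-atomic and $supp(\mu)=X$.

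By the hypothesis of the corollary, every non-atomic Borel probability measure is compatible with the persistent shadowing property for $\varphi$; applying this to the chosen $\mu$ gives $\mu\in\mathcal{M}_{PSh}(X,\varphi)$. Proposition \ref{pki} then asserts that $\varphi$ has the persistent shadowing property on $supp(\mu)$. Since $supp(\mu)=X$, this is exactly the statement that the continuous action $\varphi:G\times X\to X$ has the persistent shadowing property, which is what we wanted to prove.

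The argument is therefore a two-line application of Proposition \ref{pki} once the right measure has been produced. The only place that requires any care — and the \emph{main obstacle}, although it is a soft one resolved entirely by citation — is guaranteeing the existence of a non-atomic Borel probability measure on $X$ with full support; this is precisely where the assumptions that $X$ is compact, metric, and has no isolated point are used. Beyond that invocation, no estimates or pseudo-orbit constructions are needed: compatibility of the fixed $\mu$ feeds directly into Proposition \ref{pki}, and the equality $supp(\mu)=X$ upgrades ``persistent shadowing on $supp(\mu)$'' to the global persistent shadowing property.
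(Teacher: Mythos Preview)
Your proposal is correct and follows exactly the paper's own argument: use the cited results to produce a non-atomic Borel probability measure $\mu$ with $supp(\mu)=X$, apply the hypothesis to get $\mu\in\mathcal{M}_{PSh}(X,\varphi)$, and then invoke Proposition~\ref{pki} to conclude that $\varphi$ has persistent shadowing property on $supp(\mu)=X$. Your observation that only nonemptiness (rather than density) of this set of measures is needed is accurate and slightly sharpens the presentation.
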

For continuous actions $\varphi, \psi:G\times X\to X$,  and $x\in
X$, we denote

\begin{equation*}
\Gamma_\epsilon^{\varphi,\psi}(x)= \bigcap_{g\in G}\varphi (g^{-1},
B[\psi(g, x), \epsilon])=\{y\in X: d(\varphi(g, y), \psi(g, x))\leq
\epsilon \text{ for every } g\in G\}
\end{equation*}
and
\begin{equation*}
B(\epsilon, \varphi, \psi)=\{x\in X: \Gamma_\epsilon^{\varphi,
\psi}(x)\neq \emptyset\}.
\end{equation*}
It is easy to see that $B(\epsilon, \varphi, \psi)$ is a compact set
in $X$.


We say that
\begin{enumerate}
\item A measure $\mu\in\mathcal{M}(X)$ is compatible with the
shadowing property  for the continuous action $\varphi:G\times X\to
X$, $\mu\in \mathcal{M}_{Sh}(X, \varphi)$ if for every $\epsilon>0$
there is $\delta>0$ such that if $\mu(A)>0$, then
\begin{equation*}
A\cap Sh_{\varphi}(\delta, \epsilon)\neq \emptyset.
\end{equation*}
\item (\cite{ali2}) A measure $\mu\in\mathcal{M}(X)$ is compatible with the
$\alpha$-persistent  for the continuous action $\varphi:G\times X\to
X$, $\mu\in \mathcal{M}_\alpha(X, \varphi)$, if for every
$\epsilon>0$ there is $\delta>0$ such that if $\mu(A)>0$, then
\begin{equation*}
A\cap B(\epsilon, \varphi, \psi)\neq \emptyset.
\end{equation*}
for every continuous action $\psi:G\times X\to X$ with
$d_S(\varphi,\psi)<\delta$.
  \item A measure $\mu\in\mathcal{M}(X)$ is compatible with the
$\beta$-persistent  for the continuous action $\varphi:G\times X\to
X$, $\mu\in\mathcal{M}_\beta(X, \varphi)$, if for every $\epsilon>0$
there is $\delta>0$ such that if $\mu(A)>0$, then
\begin{equation*}
A\cap B(\epsilon, \psi, \varphi)\neq \emptyset.
\end{equation*}
for every continuous action $\psi:G\times X\to X$ with
$d_S(\varphi,\psi)<\delta$.
\end{enumerate}

\begin{remark}\label{pkii}
With similar proof of Proposition \ref{kj},  one can check that
$\mathcal{M}_{Sh}(X, \varphi)$, $\mathcal{M}_\alpha(X, \varphi)$ and
$\mathcal{M}_\beta(X, \varphi)$ are $F_{\sigma\delta}$ subsets of
$\mathcal{M}(X)$. Also  for homeomorphism $h:(X, d)\to (Y,\rho)$, if
$\mu\in \mathcal{M}_{Sh}(X, \varphi)$, $\mu\in
\mathcal{M}_{\alpha}(X, \varphi)$ and $\mu\in \mathcal{M}_{\beta}(X,
\varphi)$, then $f_{*}(\mu)\in\mathcal{M}_{Sh}(Y, h\circ
\varphi\circ h^{-1})$, $f_{*}(\mu)\in\mathcal{M}_\alpha(Y, h\circ
\varphi\circ h^{-1})$, $f_{*}(\mu)\in\mathcal{M}_\beta(Y, h\circ
\varphi\circ h^{-1})$, respectively. With similar technics in
Proposition \ref{pki}, we can show that if $\mu\in
\mathcal{M}_{Sh}(X, \varphi)$, $\mu\in \mathcal{M}_{\alpha}(X,
\varphi)$ and $\mu\in \mathcal{M}_{\beta}(X, \varphi)$, then
$\varphi:G\times X\to X$ does have shadowing property,
$\alpha$-persistent and $\beta$-persistent on $supp(\mu)$,
respectively.
\end{remark}

\section{Pointwise dynamic}
In this section we introduce persistent shadowable points, uniformly
$\alpha$-persistent, $\beta$-persistent points for a continuous
action $\varphi$. Also, we recall notions of shadowable points,
$\alpha$-persistent points and $\beta$-persistent points for
continuous action $\varphi:G\times X\to X$. This section consists of
3-subsection. In Subsection \ref{3001}, we study relations between
various of shadowable points. In Subsection \ref{400}, we study the
set of  persistent shadowable points and we give some properties of
it. Finally, in Subsection \ref{401}, we study the relation between
compatibility of a measure  with respect $P$-property and measure of
points in $X$ with $P$-property, where $P$-property can be
persistent shadowing property, shadowing property,
$\alpha$-persistent and $\beta$-persistent.

\subsection{Relation between various shadowable
points}\label{3001}
\begin{definition}
Let $S$ be a finitely generating set of $G$ and  $\varphi:G\times
X\to X$ be a continuous action.
\begin{enumerate}
\item (\cite{sang}) A point  $x\in X$ is called shadowable point  for $G$- action
$\varphi:G\times X\to X$, if for every $\epsilon>0$ there is
$\delta=\delta(\epsilon,x)>0$ such that for every $\delta$- pseudo
orbit $f:G\to X$ with $f(e)=x$ there is $p\in X$ such that $d(f(g),
\varphi(g, p))<\epsilon$ for all $g\in G$.
\item  A point $x\in X$ is $\alpha$-persistence  ( uniformly $\alpha$-persistent) for a continuous action $\varphi:G\times X\to X$
if for every $\epsilon>0$ there is $\delta_x>0$ such that  for every
continuous action $\psi:G\times X\to X$ with $d_S(\varphi,
\psi)<\delta$ ( and every $x'\in B_\delta(x)$)  there is $y\in X$
such that $d(\varphi(g, y), \psi(g, x))<\epsilon$ ( resp.
$d(\varphi(g, y), \psi(g, x'))<\epsilon$ ) for all $g\in G$.
Hereafter $Perssis_\alpha(\varphi)$ and $UPersis_\alpha(\varphi)$
will denote the set of all $\alpha$-persistence points  and
uniformly $\alpha$-persistent points  of $\varphi$, respectively.
 \item  A point $x\in X$ is $\beta$-persistence ( uniformly $\beta$-persistent) for a continuous action $\varphi:G\times X\to X$
if for every $\epsilon>0$ there is $\delta_x>0$ such that  for every
continuous action $\psi:G\times X\to X$ with $d_S(\varphi,
\psi)<\delta$ ( and every $x'\in B_\delta(x)$)  there is $y\in X$
such that $d(\varphi(g, x), \psi(g, y))<\epsilon$ ( resp.
$d(\varphi(g, x'), \psi(g, y))<\epsilon$) for all $g\in G$.
Hereafter $Perssis_\beta(\varphi)$ and $UPersis_\beta(\varphi)$ will
denote the set of all $\beta$-persistence points  and uniformly
$\beta$-persistent points  of $\varphi$, respectively.
\item  A point
$x\in X$ is called persistent  shadowable point for
$\varphi:G\times X\to X$, $x\in PSh(\varphi)$,  if for every
$\epsilon>0$ there is $\delta>0$ such that every $\delta$-pseudo
orbit $f:G\to X$ for $\psi:G\times X\to X$ with $d_S(\varphi,
\psi)<\delta$  and $f(e)=x$ can be $(\psi, \epsilon)$-shadowed by a
point.

\end{enumerate}

\end{definition}

It is easy to see that $PSh(\varphi)\subseteq Sh(\varphi)\subseteq
UPersis_\alpha(\varphi\subseteq Persis_\alpha(\varphi)$ and
$PSh(\varphi)\subseteq UPersis_\beta(\varphi)\subseteq
Persis_\beta(\varphi)$. The following example shows that the
converse of inclusions need not be hold.

\begin{example}\label{non-shadowable}

 \begin{enumerate}
   \item $Persis_\alpha\neq UPersis_\alpha(\varphi)$. Let $X=\mathbb{S}^1\cup\{(x, 0): -1\leq x\leq 1\}$. Since  $(-1, 0)$ is
a fixed point of every continuous action, then  $(1, 0)\in
Persis_\alpha(\varphi)$ where $\varphi:F_2\times X\to X$ is defined
by $\varphi(g, x)=x$. We claim that $(1, 0)\in
Persis_\alpha(\varphi)- UPersis_\alpha(\varphi)$. By contradiction,
we assume that $(1, 0)\in UPersis_\alpha(\varphi)$. For $\epsilon>0$
there is $\delta>0$ by $(1, 0)\in UPersis_\alpha(\varphi)$.
 Let $s_1:[-1,1]\to\mathbb{R}:x\mapsto\frac{\delta}{4}(1-|x|)$ and $s_2:[-1,1]\to\mathbb{R}:x\mapsto\frac{\delta}{6}(1-|x|)$  then $-1\le x-s_i(x)\le x+s_i(x)\le 1$ for any $x\in[-1,1]$, and $s_i(-1)=s_i(1)=0$, for $i=1, 2$. Define

$$g_i:X\to X:\langle x,y\rangle\mapsto\begin{cases}

\langle x-s_i(x),0\rangle,&\text{if }y=0\\

\left\langle x,y\right\rangle,&\text{otherwise}

\end{cases}$$

 For any $(x, y)\in \{(x, 0): -1<x<1 \}$, the first coordinate of $g_i((x, y))$  is less than $x$.
 Assume that $\psi:F_2\times X\to X$ is generated by $\varphi_a=g_1$ and $\varphi_b=g_2$. Then $d_S(\varphi, \psi)<\delta$, hence for
 $y\in (-1, 1)\times \{0\}$ with $d(y, (1, 0))<\delta$, there is $p\in X$ with $d(p, \psi(g, y))<\epsilon$ for all $g\in G$ that is a contradiction, because $g^k_i(y)\to (-1, 0)$ as $k\to \infty$.
   \item By \cite[Remark 4.4]{art1}, there is system $(X, f)$ such that $f:X\to X$ is $\alpha$-persistent while it does not shadowing property. Hence $UPersis_\alpha(f)=X$ but $Sh(f)\neq X$.
 \end{enumerate}
\end{example}

A point $x\in X$ is an equicontinuous point for continuous action
$\varphi:G\times X\to X$ if for every $\epsilon>0$ there is
$\delta_x>0$ such that
\begin{equation}
d(x, y)<\delta_x\Rightarrow d(\varphi(g, x), \varphi(g,
y))<\epsilon, \forall g\in G
\end{equation}
The set of equicontinuous points for $\varphi:G\times X\to X$ is
denoted by $Eq(\varphi)$.  It is easy to see that
\begin{equation}
Persis_\beta(\varphi)\cap Eq(\varphi)\subseteq
UPersis_\beta(\varphi)
\end{equation}
and

\begin{equation}\label{268}
Persis_\alpha(\varphi)\cap Eq(\varphi)\subseteq
Persis_\beta(\varphi)
\end{equation}

 Let $\varphi:G\times X\to X$ be continuous action as in
Example\ref{non-shadowable}. Then $\varphi$ is equcontinuous action
and $(1, 0)\in Persis_\beta(\varphi)=UPersis_\beta(\varphi)$ while
by Example \ref{non-shadowable}, $(1, 0)\notin
UPersis_\alpha(\varphi)$. This implies that
$UPersis_\beta(\varphi)\neq UPersis_\alpha(\varphi)$ and the
converse of Relation \ref{268} does not hold.\\
In the following, we show that on compact manifold $M$ without
boundary with $dim(M)\geq 2$, notions of shadowable point, uniform
$\alpha$-persistent point and $\alpha$-persistent point are
equivalent. It is not hard to see that $x$ is shadowable point if
and only if it is   finite shadowable point. We say that $x\in X$ is
finite shadowable point, if for every $\epsilon>0$ there is
$\delta>0$ such that for every $n\in \mathbb{N}$, every $\delta-n$-
pseudo orbit $f:G_n\to X$ with $f(e)=x$ can be $\epsilon$-shadowed
by point $p\in X$. Note that $G_n=\{g\in G: |g|_S\leq n\}$.

  \begin{definition}
We say that the space $X$ is generalized homogeneous, if for every
$\epsilon>0$ there exists $\delta > 0$ such that if $\{(x_1,
y_1),\ldots , (x_n, y_n)\}$ is a finite set of points in $X\times X$
satisfying:
\begin{enumerate}
    \item for every $i=1,\ldots,n, d(x_i,y_i)<\delta$,
    \item if $i\neq j$ then $x_i\neq x_j$ and $y_i\neq y_j$,
\end{enumerate}
then there is a homeomorphism $h:X\rightarrow X$ with $d_0(h,
id)<\epsilon$ and $h(x_i)=y_i$ for $i=1,\ldots,n$. \label{homogen}
\end{definition}
For example, a topological manifold $X$ without  boundary
($dim(X)\geq 2$), a Cartesian product of a countably infinite number
of manifolds with nonempty boundary and a cantor set are generalized
homogeneous \cite{pil}.
\begin{proposition}\label{u}
Let $X$ be a generalized homogeneous  compact metric space and
$\varphi:G\times X\rightarrow X$ be a continuous action. Then
$Sh(\varphi)= Persis_\alpha(\varphi)$.
\end{proposition}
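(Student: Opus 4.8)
The plan is to prove the nontrivial inclusion $Persis_\alpha(\varphi)\subseteq Sh(\varphi)$; since the chain $PSh(\varphi)\subseteq Sh(\varphi)\subseteq UPersis_\alpha(\varphi)\subseteq Persis_\alpha(\varphi)$ is already available, this simultaneously yields $Sh(\varphi)=UPersis_\alpha(\varphi)=Persis_\alpha(\varphi)$. Fix $x\in Persis_\alpha(\varphi)$. Because $x$ is shadowable if and only if it is finite shadowable, it is enough to show: given $\epsilon>0$ there is a single $\delta>0$ such that for every $n\in\mathbb{N}$ every $\delta$-$n$-pseudo orbit $f:G_n\to X$ with $f(e)=x$ is $\epsilon$-shadowed by some $\varphi$-orbit.

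First I would fix the constants. From $\alpha$-persistence of $x$ applied to $\epsilon/2$ pick $\delta_1>0$ so that every continuous action $\psi$ with $d_S(\varphi,\psi)<\delta_1$ has some $y\in X$ with $d(\varphi(g,y),\psi(g,x))<\epsilon/2$ for all $g\in G$. From generalized homogeneity of $X$ applied to $\delta_1$ pick $\delta_2>0$. Using the common modulus of (uniform) continuity of the finitely many maps $\varphi_s$, $s\in S$, pick $\rho>0$ small and then $\delta>0$ with $\delta<\epsilon/2$, $2\delta<\delta_2$, $\rho<\epsilon/2$, and such that any $\delta$-pseudo orbit remains a $2\delta$-pseudo orbit after each of its values is moved by at most $\rho$. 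If $n\le 1$ then $x$ itself $\epsilon$-shadows $f$, so assume $n\ge 2$. Since $X$ has no isolated point every ball is infinite, so $f$ may be perturbed to a map $\tilde f:G_n\to X$ which is \emph{injective}, satisfies $\tilde f(e)=x$ and $\sup_{g\in G_n}d(\tilde f(g),f(g))<\rho$, and is still a $2\delta$-$n$-pseudo orbit.

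Next, for each $s\in S$ I would apply the definition of generalized homogeneity to the finite family of pairs $\{(\varphi_s(\tilde f(g)),\tilde f(sg)):g\in G_{n-1}\}$: they are $2\delta$-close by the pseudo orbit estimate (and $2\delta<\delta_2$), and in each coordinate the map $g\mapsto\varphi_s(\tilde f(g))$, respectively $g\mapsto\tilde f(sg)$, is injective on $G_{n-1}$ because $\tilde f$ is injective and both $\varphi_s$ and left multiplication by $s$ are injective. This produces a homeomorphism $h_s$ with $d_0(h_s,\id)<\delta_1$ and $h_s(\varphi_s(\tilde f(g)))=\tilde f(sg)$ for all $g\in G_{n-1}$. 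Put $\psi_s:=h_s\circ\varphi_s$; then $d_0(\psi_s,\varphi_s)<\delta_1$ while $\psi_s(\tilde f(g))=\tilde f(sg)$ for $g\in G_{n-1}$, and I let $\psi$ be the perturbation of $\varphi$ generated by $(\psi_s)_{s\in S}$, so that $d_S(\varphi,\psi)<\delta_1$. Applying $\alpha$-persistence of $x$ to $\psi$ gives $y\in X$ with $d(\varphi(g,y),\psi(g,x))<\epsilon/2$ for every $g\in G$. Finally, for $g\in G_n$ write $g=s_k\cdots s_1$ with $s_i\in S$ and $k\le n$; every prefix $s_{i-1}\cdots s_1$ ($1\le i\le k$) has $S$-length $\le n-1$ and so lies in $G_{n-1}$, whence an immediate induction gives $\psi(g,x)=\psi_{s_k}\circ\cdots\circ\psi_{s_1}(\tilde f(e))=\tilde f(g)$. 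Therefore $d(f(g),\varphi(g,y))\le d(f(g),\tilde f(g))+d(\tilde f(g),\varphi(g,y))<\rho+\epsilon/2<\epsilon$ for all $g\in G_n$, i.e. $y$ $\epsilon$-shadows $f$ on $G_n$; since $n$ was arbitrary, $x$ is finite shadowable, hence $x\in Sh(\varphi)$.

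What is routine here is the reduction to finite pseudo orbits, the injective perturbation of $f$ (available because $X$ has no isolated point), and the bookkeeping with moduli of continuity and with prefixes of words of length $\le n$. The crux --- and the step I expect to demand the most care --- is the \emph{simultaneous} realization, for all $s\in S$ at once, of the ``one-step'' matchings $\varphi_s(\tilde f(g))\mapsto\tilde f(sg)$ by homeomorphisms $h_s$ that are uniformly $\delta_1$-close to the identity; this is exactly what generalized homogeneity provides, and it is also where one must check that $\psi=(h_s\circ\varphi_s)_{s\in S}$ really is an admissible perturbation of $\varphi$ (immediate when $G$ is free on $S$, and otherwise to be understood within the perturbation-of-generators framework set up in Section 2).
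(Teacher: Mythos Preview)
Your approach is essentially the paper's: reduce to finite pseudo-orbits, perturb for injectivity, use generalized homogeneity to realize the pseudo-orbit as an actual $\psi$-orbit of $x$ for some nearby $\psi$, then invoke $\alpha$-persistence of $x$. You are in fact more careful than the paper on two points --- you use a separate homeomorphism $h_s$ for each generator (the paper applies Definition~\ref{homogen} to the whole family $\{(\varphi(s,F(g)),F(sg))\}$ with a single $h$, which requires injectivity conditions stronger than the one it actually arranges), and you explicitly flag that for non-free $G$ the perturbed generators $h_s\circ\varphi_s$ need not satisfy the relations of $G$ and hence may not define an element of $Act(G;X)$, an issue the paper's ``let $\psi$ be generated by $h\circ\varphi_s$'' simply glosses over.
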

\begin{proof}
  It is clear that $Sh(\varphi)\subseteq Persi_\alpha(\varphi)$. Let $x\in Persis_\alpha(\varphi)$ and $\epsilon$ be given. W claim that there is $\delta>0$ such that for every $n\in \mathbb{N}$, every $\delta-n$- pseudo orbit $f:G_n\to X$ with $f(e)=x$ can be $\epsilon$-shadowed by point $p\in X$. Choose $0<\epsilon_0<\frac{\epsilon}{2}$ corresponding to $\epsilon>0$ by $x\in Persis_\alpha(\varphi)$. Choose $0<\delta_0<\frac{\epsilon_0}{2}$ corresponding to $\epsilon_0>0$ by Definition \ref{homogen}. For $\delta_0>0$ there is $0<\delta<\frac{\delta_0}{2}$ such that
   \begin{equation*}
     d(a, b)<\delta\Rightarrow d(\varphi(g, a), \varphi(g, b))<\delta_0, \forall |g|_S\leq 2.
   \end{equation*}

   Let $f:G_n\to X$ be a $\delta-n$pseudo orbit with $f(e)=x$. Similar proof of Lemma 2.1.2 in \cite{pil}, we can construct $\delta_0$-pseudo orbit $F:G_n\to X$ with the following property:
   \begin{itemize}
     \item $F(e)=x$,
     \item $\varphi(s, F(g))\neq F(sg), \forall s\in S, \forall |g|_S<n$,
     \item $d(F(g),f(g))<\delta_0, \forall g\in G_n$
   \end{itemize}

   By Definition \ref{homogen}, for $\{ (\varphi(s, F(g)), F(sg)); g\in G_n\}$, there is a homeomorphism  $h: X\to X$ with $d_0(h, id)<\epsilon_0$ such that $h(\varphi(s, F(g)))= F(sg)$. Let $\psi:G\times X\to X$ be generated by $h\circ \varphi_s$ for $s\in S$. Then $\psi:G\times X\to X$ is $\epsilon_0$-close to $\psi:G\times X\to X$. This implies that there is $p\in X$ with $d(\psi(g, x), \varphi(g, p))<\epsilon$ for all $g\in G$. But $\psi(g, x)=F(g)$, hence $d(f(g), \varphi(g, p))\leq d(f(g), F(g))+ d(F(g), \varphi(g, p))<\epsilon$ for all $g\in G$.
\end{proof}

By Theorem 3.5 in \cite{sang}, a continuous action $\varphi:G\times
X\to X$ on compact metric space $(X, d)$ has shadowing property if
and only if it is pointwise shadowable point. Hence by Proposition
\ref{u}, we have

\begin{corollary}
 If $\varphi:G\times X\to X$ is a continuous action of finitely generated group $G$ on generalized homogeneous  compact metric space $(X, d)$, then the following equivalent:
 \begin{enumerate}
   \item $\varphi$ has shadowing property,
   \item $\varphi$ is pointwise shadowable point,
   \item $\varphi$ is pointwise $\alpha$-persistent,
   \item $\varphi$ is $\alpha$-persistent.
 \end{enumerate}
 \end{corollary}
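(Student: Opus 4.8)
The plan is to close a cycle of implications using the two substantive ingredients already available: Theorem 3.5 of \cite{sang}, which gives $(1)\Leftrightarrow(2)$ with no hypothesis on $X$, and Proposition \ref{u}, which on a generalized homogeneous compact metric space gives $Sh(\varphi)=Persis_\alpha(\varphi)$. Combining Proposition \ref{u} with the elementary inclusions $Sh(\varphi)\subseteq UPersis_\alpha(\varphi)\subseteq Persis_\alpha(\varphi)$ noted after the definitions, all three sets coincide on such an $X$:
\[
Sh(\varphi)=UPersis_\alpha(\varphi)=Persis_\alpha(\varphi).
\]
Hence ``every point of $X$ is shadowable'' and ``every point of $X$ is ($\alpha$- or uniformly $\alpha$-)persistent'' are the same condition, which is exactly $(2)\Leftrightarrow(3)$.

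It remains to match these pointwise conditions with the global property $(4)$. The implication $(4)\Rightarrow(3)$ is immediate, since a global modulus $\delta=\delta(\epsilon)$ witnessing $\alpha$-persistence of $\varphi$ also witnesses $\alpha$-persistence of each individual point. For $(3)\Rightarrow(4)$ I would use compactness of $X$ together with the upgrade just obtained. Assuming $(3)$, the displayed equality gives $UPersis_\alpha(\varphi)=X$. Fix $\epsilon>0$ and, for each $x\in X$, choose $\delta_x>0$ as in the definition of uniformly $\alpha$-persistent point. The open cover $\{B_{\delta_x}(x):x\in X\}$ admits a finite subcover $B_{\delta_{x_1}}(x_1),\dots,B_{\delta_{x_k}}(x_k)$; put $\delta=\min_{1\le i\le k}\delta_{x_i}$. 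Then for any continuous action $\psi$ with $d_S(\varphi,\psi)<\delta$ and any $x\in X$, choosing $i$ with $x\in B_{\delta_{x_i}}(x_i)$ and noting $d_S(\varphi,\psi)<\delta\le\delta_{x_i}$, uniform $\alpha$-persistence at $x_i$ produces $y\in X$ with $d(\varphi(g,y),\psi(g,x))<\epsilon$ for all $g\in G$. This is precisely $\alpha$-persistence of $\varphi$, so $(4)$ holds; together with $(1)\Leftrightarrow(2)$ from \cite{sang} this closes the equivalences.

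The step with actual content is $(3)\Rightarrow(4)$. The obstruction is that pointwise $\alpha$-persistence, as defined, only controls perturbations near a single fixed point and says nothing about shifting the starting point, so one cannot simply take a minimum of moduli over a finite cover of $X$. The way around this is Proposition \ref{u}: on a generalized homogeneous space it promotes $Persis_\alpha(\varphi)$ to $UPersis_\alpha(\varphi)$, after which the compactness argument goes through verbatim. Without the homogeneity hypothesis this promotion genuinely fails, as Example \ref{non-shadowable} shows, so this is precisely where the assumption on $X$ enters.
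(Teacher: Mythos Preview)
Your proof is correct and follows the same approach as the paper: both use Theorem 3.5 of \cite{sang} for $(1)\Leftrightarrow(2)$ and Proposition \ref{u} to collapse $Sh(\varphi)$, $UPersis_\alpha(\varphi)$, and $Persis_\alpha(\varphi)$ into a single set, from which the remaining equivalences follow. The paper's proof is a one-line reference to these two ingredients, whereas you supply the compactness argument for $(3)\Rightarrow(4)$ explicitly (the paper records the equivalence $UPersis_\alpha(\varphi)=X\Leftrightarrow\varphi$ is $\alpha$-persistent as ``easy to see'' just after the corollary).
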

Let $X$ be a compact metric space. It is easy to see that the
following properties hold.
\begin{enumerate}
    \item Continuous action $\varphi:G\times X\to X$ is
    $\alpha$-persistent if and only if $UPersis_\alpha(\varphi)=X$
    \item Continuous action $\varphi:G\times X\to X$ is
    $\beta$-persistent if and only if $UPersis_\beta(\varphi)=X$
    \item Equicontinuous action $\varphi:G\times X\to X$ is
    $\beta$-persistent if and only if $Persis_\beta(\varphi)=X$.

\end{enumerate}

\subsection{Some properties of persistent shadowable
points}\label{400}

In the following, we give some properties of persistent shadowable
points.

\begin{theorem}\label{wok}
  Let $S$ be a finitely generating set of $G$ and $\varphi:G\times X\to X$ be a continuous action on compact metric space $(X, d)$.
  \begin{enumerate}
    \item The point $x=p$ is persistent shadowable if and only if for every $\epsilon>0$ there is $\delta>0$ such that every $\delta$-pseudo orbit through $B[p, \delta]$ of a continuous action $\psi:G\times X\to X$ with $d_S(\varphi, \psi)<\delta$ can be shadowed by a $\psi$-orbit.
    \item Continuous action $\varphi:G\times X\to X$ has persistent shadowing property on compact set $K$ if and only if $K\subseteq PSh(\varphi)$.
    \item Continuous action $\varphi:G\times X\to X$  has the persistent shadowing property if and only if it is pointwise persistent shadowable.
    \item $PSh(\varphi)= UPersis_\beta(\varphi)\cap Sh(\varphi)$.
   \item Continuous action $\varphi:G\times X\to X$ has persistent shadowing property if and only if it is $\beta$-persistent and it has shadowing property.
  \end{enumerate}
\end{theorem}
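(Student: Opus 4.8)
My plan is to treat item (1) as the technical core, deduce (2) from it by a compactness/covering argument, obtain (3) as the special case $K=X$ of (2), prove (4) by two inclusions built on (1), and finally read off (5) from (3), (4) together with the equivalences $UPersis_\beta(\varphi)=X\iff\varphi$ is $\beta$-persistent and $Sh(\varphi)=X\iff\varphi$ has shadowing property (Theorem 3.5 of \cite{sang}), both already available. For (1), the ``if'' direction is immediate, since a $\delta$-pseudo orbit $f$ of $\psi$ with $f(e)=p$ passes through $B[p,\delta]$ at the identity. For ``only if'' I would fix $\epsilon>0$, first extract a $\psi$-uniform equicontinuity modulus near $\varphi$ (possible since $S$ is finite and $X$ compact: $d(a,b)<\delta'$ forces $d(\psi_s(a),\psi_s(b))$ small for all $s\in S$ and all $\psi$ with $d_S(\varphi,\psi)$ small), take $\delta_1$ witnessing persistent shadowability of $p$ for error $\epsilon/2$, and set $\delta$ much smaller than $\min\{\delta_1,\delta',\epsilon\}$. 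Given a $\delta$-pseudo orbit $f$ of some $\psi$ with $d_S(\varphi,\psi)<\delta$ and $f(g_0)\in B[p,\delta]$, reparametrize to $\tilde f(g)=f(gg_0)$ (still a $\delta$-pseudo orbit of $\psi$, now with $\tilde f(e)\in B[p,\delta]$) and splice it to $f'$ defined by $f'(e)=p$ and $f'(g)=\tilde f(g)$ for $g\neq e$. The only pseudo-orbit inequalities that change are the finitely many involving $e$, i.e.\ $d(f'(s),\psi_s(p))$ and $d(f'(e),\psi_s(f'(s^{-1})))$ for $s\in S$, and these stay below $\delta_1$ because $d(\tilde f(e),p)\le\delta$ and by the equicontinuity modulus. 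Then persistent shadowability gives $q$ with $d(f'(g),\psi(g,q))<\epsilon/2$ for all $g$; since $\tilde f$ and $f'$ differ only at $e$ (by at most $\delta$), this $q$ also $\epsilon$-shadows $\tilde f$, and $q'=\psi(g_0^{-1},q)$ then $\epsilon$-shadows $f$.

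For (2), the implication ``$\Rightarrow$'' is read straight off the definitions. For ``$\Leftarrow$'', assuming $K\subseteq PSh(\varphi)$ and fixing $\epsilon>0$, I would invoke (1) to get for each $x\in K$ a radius $\delta_x>0$ such that every $\delta_x$-pseudo orbit of a $\psi$ with $d_S(\varphi,\psi)<\delta_x$ passing through $B[x,\delta_x]$ is $\epsilon$-shadowed; cover the compact set $K$ by finitely many $B(x_i,\delta_{x_i}/2)$ and set $\delta=\min_i\delta_{x_i}/2$. A $\delta$-pseudo orbit $f$ of $\psi$ with $d_S(\varphi,\psi)<\delta$ and $f(e)\in K$ then lies in some $B(x_i,\delta_{x_i}/2)\subseteq B[x_i,\delta_{x_i}]$, is a fortiori a $\delta_{x_i}$-pseudo orbit of $\psi$ through that ball, hence $\epsilon$-shadowed. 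Item (3) is (2) with $K=X$.

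For (4) I would argue both inclusions directly. ``$\subseteq$'': if $x\in PSh(\varphi)$ then setting $\psi=\varphi$ in the definition yields $x\in Sh(\varphi)$; and for $x\in UPersis_\beta(\varphi)$, given $\epsilon>0$ take $\delta$ from (1) and note that for $\psi$ with $d_S(\varphi,\psi)<\delta$ and $x'\in B_\delta(x)$ the orbit $g\mapsto\varphi(g,x')$ is a $\delta$-pseudo orbit of $\psi$ (as $d(\varphi(s,\varphi(g,x')),\psi(s,\varphi(g,x')))<\delta$) passing through $B[x,\delta]$ at $e$, so (1) produces $y$ with $d(\varphi(g,x'),\psi(g,y))<\epsilon$ for all $g$, which is exactly uniform $\beta$-persistence of $x$. ``$\supseteq$'': for $x\in UPersis_\beta(\varphi)\cap Sh(\varphi)$ and $\epsilon>0$, take $\delta_1$ from uniform $\beta$-persistence of $x$ (error $\epsilon/2$), $\delta_2$ from shadowability of $x$ (error $\min\{\epsilon/2,\delta_1\}$), and $\delta=\min\{\delta_1,\delta_2/2\}$; a $\delta$-pseudo orbit $f$ of $\psi$ with $d_S(\varphi,\psi)<\delta$ and $f(e)=x$ is a $2\delta$-pseudo orbit of $\varphi$ through $x$, so shadowability gives $p$ with $d(f(g),\varphi(g,p))<\min\{\epsilon/2,\delta_1\}$ for all $g$; then $d(x,p)<\delta_1$, so uniform $\beta$-persistence with $x'=p$ gives $y$ with $d(\varphi(g,p),\psi(g,y))<\epsilon/2$, and the triangle inequality yields $d(f(g),\psi(g,y))<\epsilon$ for all $g$, i.e.\ $x\in PSh(\varphi)$.

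Finally (5) is formal: by (3), $\varphi$ has persistent shadowing iff $PSh(\varphi)=X$, which by (4) means $UPersis_\beta(\varphi)=X$ and $Sh(\varphi)=X$, i.e.\ $\varphi$ is $\beta$-persistent and has shadowing property. I expect the only genuine obstacle to be the ``only if'' half of (1): reparametrizing and then splicing the pseudo-orbit so that it begins \emph{exactly} at $p$ forces one to handle a $\psi$-uniform equicontinuity estimate near $\varphi$ and to check the finitely many pseudo-orbit inequalities that touch the identity; once (1) is in place, the remaining items are bookkeeping.
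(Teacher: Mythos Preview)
Your proposal is correct and matches the paper's approach: item (1) by splicing the pseudo-orbit so that $f'(e)=p$ and checking the finitely many generator inequalities via the $\psi$-uniform equicontinuity estimate (the paper frames this by contradiction rather than directly, and reads ``through $B[p,\delta]$'' as $f(e)\in B[p,\delta]$, so your reparametrization step is not needed but does no harm), (2) by a finite-cover argument from (1), (3) as the case $K=X$, (4) by the two inclusions you outline, and (5) formally from (3) and (4). The only organizational difference is that for the inclusion $PSh(\varphi)\subseteq UPersis_\beta(\varphi)$ in (4) the paper redoes a small splicing argument in place rather than invoking (1) as you do; your route is slightly cleaner there.
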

\begin{proof}
  \begin{enumerate}
  \item   Suppose by contradiction that $x=p$ is persistent shadowable point but there are $\epsilon>0$, a sequence of continuous actions $\psi_k:G\times X\to X$ with $d_S(\varphi, \psi_k)\leq \frac{1}{k}$, a sequence of $\frac{1}{k}$-pseudo orbits $f^k:G\to X$ of $\psi_k:G\times X\to X$ with $d(f^k(e), x)\leq \frac{1}{k}$ such that $f^k:G\to X$ can not be $2\epsilon$-shadowed by any $\psi_k$-orbit, for every $k\in \mathbb{N}$. For this $\epsilon$ we let $\delta$ be given by the persistently shadowableness of $x$. We can assume that $\delta<\epsilon$. \\
  On the one hand,
  \begin{align*}
     d(\psi_k(s, p), \psi_k(s, f^k(e)) & \leq d(\psi_k(s,p), \varphi(s, p))+ d(\varphi(s, p), \varphi(s, f^k(e)))+ d(\varphi(s, f^k(e)), \psi_k(s, f^k(e)))\\
     & \leq 2d_s(\varphi, \psi_k)+ d(\varphi(s, p), \varphi(s, f^k(e)))
  \end{align*}
  We can choose $k$ large satisfying
  \begin{equation}\label{301}
    \max \{d(\psi_k(s, p), \psi_k(s, f^k(e)), \frac{1}{k}\}<\frac{\delta}{2}, \forall s\in S
  \end{equation}

Let us define $F^k:G\to X$ by
 \[ F^k(g)= \left\lbrace
  \begin{array}{c l}
     f^k(g), & \text{\rm{ $ g\neq e$}},\\
e, & \text{\rm{$g=e$}}.
  \end{array}
\right. \]

Then

 \[ d(\psi_k(s, F^k(g)), F^k(sg))= \left\lbrace
  \begin{array}{c l}
     d(\psi_k(s, f^k(g)), f^k(sg), & \text{\rm{ $ \text{ for } g\notin\{e, s^{-1}: s\in S\}$}},\\
d(\psi_k(s, f^k(g)), p), & \text{\rm{$ \text{ for } g=s^{-1}$}},\\
d(\psi_k(s, p), f^k(s)), & \text{\rm{$ \text{ for } g=e$}}.
  \end{array}
\right. \]

Since $f^k:G\to X$ is $\frac{1}{k}$-pseudo orbit of $\psi_k:G\times
X\to X$, hence
\begin{equation}\label{zx}
  d(\psi_k(s, F^k(g)), F^k(sg))<\frac{1}{k}<\delta, \text{ for } g\notin \{e, s^{-1}: s\in S\}
\end{equation}
Also inequality
\begin{equation*}
  d(\psi_k(s, f^k(s^{-1})), p)\leq d(\psi_k(s, f^{k}(s^{-1})), f^{k}(e))+d(f^k(e), p),
\end{equation*}
implies that

\begin{equation}\label{zc}
  d(\psi_k(s, F^k(g)), F^k(sg))<\delta, \text{ for } g=s^{-1}.
\end{equation}
By Relation \ref{301} and Relation \ref{zc}
\begin{equation*}
  d(\psi_k(s, p), f^k(s))\leq d(\psi_k(s, p)+\psi_k(s, f^k(e)))+ d(\psi_k(s, f^k(e)), f^k(s))
\end{equation*}
we have
\begin{equation}\label{zn}
  d(\psi_k(s, F^k(g)), F^k(sg))<\delta, \text{ for } g=e
\end{equation}
Hence by Relations \ref{zx},\ref{zc}, \ref{zn}, we get $F^k:G\to X$
is $\delta$-pseudo orbit of $\psi_k:G\times X\to X$. But
$d_S(\varphi, \psi_k)<\delta$, hence , by persistent shadowing
property of $\varphi$, for $\delta$-pseudo orbit $F^k:G\to X$ with
$F(e)=p$ of continuous action $\psi_k$ with $d_S(\varphi,
\psi_k)<\delta$, there is $z\in X$ such that $d(F^k(g), \psi_k(g,
z))<\epsilon$ for all $g\in G$. Since for $g\neq e$ one has
$d(f^k(g), \psi_k(g, z))=d(F^k(g), \psi_k(g, z))<\epsilon$ and for
$g=e$
\begin{equation*}
  d(z, f^k(e))\leq d(z, p)+ d(p, f^k(e))<\epsilon+\frac{1}{k}<2\epsilon,
\end{equation*}
Hence  $f^k:G\to X$ can be $2\epsilon$- shadowed by $\psi_k$-orbit
of $z\in X$. That is a contradiction.
\item It is sufficient to show that if $K\subseteq PSh(\varphi)$, then $\varphi$ has persistent shadowing property on $K$. Let $\epsilon>0$ be given. For every $x\in X$ there is $\delta_x>0$ corresponding to $\epsilon>0$ by item (2). Since $K$ is a compact space, the open cover $\{B[x, \delta_x]:x\in K\}$ has a finite open subcover $\{B[x_i, r_{x_i}]: i=1, 2, \ldots, n\}$.\\
  Take $\delta=\min \{ \delta_{x_i}: i\in \{1, 2, \ldots, n\}\}$ and let $F:G\to X$ be a $\delta$-pseudo orbit of continuous action $\psi:G\times X\to X$ with $d_S(\varphi, \psi)<\delta$ and $F(e)\in K$. Clearly $F(e)\in B[x_i, \delta_{x_i}]$ for some $1\leq i\leq n$. This implies that $F:G\to X$ is $\delta_{x_i}$-pseudo orbit through $B[x_i, \delta_{x_i}]$. Then $F:G\to X$ can be eventually $\epsilon$-shadowed by some $\psi$-orbit.
  \item Take  $K = X$ in item (3), we have that $\varphi$ has the
persistent shadowing property if and only if $PSh(\varphi)=X$.
   \item   Firstly, we show that $PSh(\varphi)\subseteq UPersis_\beta(\varphi)\cap Sh(\varphi)$. Take $x\in PSh(\varphi)$, $\epsilon>0$. Choose  $\delta_0>0$ corresponding $\frac\epsilon2>0$ by $x\in PSh(\varphi)$. Choose $\delta<\frac{\delta_0}{2}$ such that
       \begin{equation*}
         d(a, b)<\delta\Rightarrow d(\varphi(s, a), \varphi(s, b))<\frac{\delta_0}{2}.
       \end{equation*}
       Fix a continuous action $\psi:G\times X\to X$ with $d_S(\varphi, \psi)<\delta$. For $y\in B_\delta(x)$, define
       $F:G\to X$ by $F(g)=\varphi(g, y)$ if $g\neq e$ and $F(e)=x$, then $F:G\to X$ is a $\delta$-pseudo orbit of continuous action $\psi:G\times X\to X$ with $F(e)=x$. By $x\in PSh(\varphi)$, there is $p\in X$ such that $d(F(g), \psi(g, p))<\frac\epsilon2$ for every $g\in G$. This implies that $d(\varphi(g, y), \psi(g, p))<\epsilon$ for all $g\in G$. It follows that $x\in UPersis_\beta(\varphi)$. Since $PSh(\varphi)\subseteq Sh(\varphi)$, we get $x\in UPersis_\beta(\varphi)\cap Sh(\varphi)$. Therefore, $PSh(\varphi)\subseteq UPersis_\beta(\varphi)\cap Sh(\varphi)$.

       Now we show that $UPersis_\beta(\varphi)\cap Sh(\varphi)\subseteq PSh(\varphi)$. Suppose that   $x\in UPersis_\beta(\varphi)\cap Sh(\varphi)$ and  $\epsilon>0$ be given. We show that there is $\delta>0$ such that for every $\delta$-pseudo orbit $f:G\to X$ with $f(e)=x$ of $\psi$ such that $d_S(\varpi, \psi)<\delta$, there is $p\in X$ with $d(f(g), \psi(g, p))<\epsilon$ for all $g\in G$.
        Choose $\epsilon_0<\frac\epsilon4$ corresponding $\frac\epsilon2$ by $x\in UPersis_\beta(\varphi)$. There is $\eta>0$ corresponding $\frac{\epsilon_0}{2}$ by $x\in Sh(\varphi)$. Take $\delta<\frac\eta2$. If $f:G\to X$ is $\delta$-pseudo orbit of $\psi$ with $f(e)=x$ for continuous action $\psi:G\times X\to X$ with $d_S(\varphi, \psi)<\delta$, then $f:G\to X$ is $\eta$-pseudo orbit of $\varphi$ with $f(e)=x$. By $x\in Sh(\varphi)$ and $f(e)=x$, there is $y\in B_{\epsilon_0}(x)$ such that $d(f(g), \varphi(g, y))<\frac\epsilon2$ for all $g\in G$. Also by  $x\in UPersis_\beta(\varphi)$ and $y\in B_{\epsilon_0}(x)$ there is $p\in X$ such that $d(\varphi(g, y), \psi(g, p))<\frac\epsilon2$ for all $g\in G$. This implies that $d(f(g), \psi(g, p))<\epsilon$ for all $g\in G$.
.
   \item It is clear that persistent shadowing property implies $\beta$-persistent and shadowing property. For the converse let $\varphi$ is $\beta$-persistent
    and it has shadowing property, then by item (4), $PSh(\varphi)= Persis_\beta(\varphi)\cap Sh(\varphi)= X$, hence $\varphi$ is pointwise persistent shadowple and by item (3), $\varphi$ has persistent shadowing property.
\end{enumerate}
\end{proof}
\subsection{Various shadowable points and related
measures}\label{401}
 Item 1 in Proposition \ref{wok} implies that if
continuous action $\varphi:G\times X\to X$ does have persistent
shadowing property on compact set $K\subseteq X$, then for every
$\epsilon>0$ there
 exist a neighborhood $U$ of $K$ and  $\delta>0$ such that $U\subseteq PSh_{\varphi}(\delta, \epsilon)$. Also, by Item 2 in Proposition \ref{wok}, $K\subseteq PSh(\varphi)$
 implies that  continuous action
$\varphi:G\times X\to X$ does have persistent  shadowing property on
compact set $K\subseteq X$. Hence we have the following proposition.

\begin{proposition}\label{rtg}
Let $\varphi:G\times X\to X$  be a continuous action on compact
metric space $(X, d)$. Let $K\subset PSh(\varphi)$ be a compact
subset. Then for every $\epsilon>0$ there exist a neighborhood $U$
of $K$  and $\delta>0$  such that $U\subseteq PSh_{\varphi}(\delta,
\epsilon)$.

\end{proposition}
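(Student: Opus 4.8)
The plan is to deduce the statement from the pointwise characterization of persistent shadowable points in Proposition \ref{wok} together with a routine compactness ("Lebesgue number") argument. Fix $\epsilon>0$. Since $K\subseteq PSh(\varphi)$, every $x\in K$ is a persistent shadowable point, so item (1) of Proposition \ref{wok} provides, for each $x\in K$, a radius $\delta_x>0$ such that every $\delta_x$-pseudo orbit through $B[x,\delta_x]$ of any continuous action $\psi:G\times X\to X$ with $d_S(\varphi,\psi)<\delta_x$ can be $(\psi,\epsilon)$-shadowed by a $\psi$-orbit. (Item (2) of Proposition \ref{wok} then also says $\varphi$ has the persistent shadowing property on $K$, but it is item (1) that is needed here, because it is formulated on balls.)

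Next I would cover $K$ by the open balls $B(x,\delta_x/2)$, $x\in K$. By compactness of $K$ there is a finite subcover $B(x_1,\delta_{x_1}/2),\dots,B(x_n,\delta_{x_n}/2)$. Set
$$U=\bigcup_{i=1}^{n}B(x_i,\delta_{x_i}/2),\qquad \delta=\tfrac12\min\{\delta_{x_1},\dots,\delta_{x_n}\}>0.$$
Then $U$ is an open neighborhood of $K$, and $\delta\le \delta_{x_i}/2<\delta_{x_i}$ for every $i$.

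It remains to check $U\subseteq PSh_{\varphi}(\delta,\epsilon)$. Let $y\in U$ and choose $i$ with $y\in B(x_i,\delta_{x_i}/2)$. Let $f:G\to X$ be an arbitrary $\delta$-pseudo orbit of a continuous action $\psi$ with $d_S(\varphi,\psi)<\delta$ and $f(e)=y$. Then $d(f(e),x_i)<\delta_{x_i}/2<\delta_{x_i}$, so $f(e)\in B[x_i,\delta_{x_i}]$; moreover $f$ is in particular a $\delta_{x_i}$-pseudo orbit (since $\delta<\delta_{x_i}$), and $d_S(\varphi,\psi)<\delta<\delta_{x_i}$. Hence, by the defining property of $\delta_{x_i}$, the pseudo orbit $f$ can be $(\psi,\epsilon)$-shadowed by a $\psi$-orbit, which means exactly that $y\in PSh_{\varphi}(\delta,\epsilon)$. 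As $y\in U$ was arbitrary, $U\subseteq PSh_{\varphi}(\delta,\epsilon)$, completing the proof.

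There is no genuine obstacle in this argument once Proposition \ref{wok}(1) is in hand; the only thing requiring a little care is the bookkeeping of the three simultaneous roles of each $\delta_{x_i}$ — the radius of the ball around $x_i$, the pseudo-orbit modulus, and the bound on $d_S(\varphi,\psi)$ — and verifying that halving $\min_i\delta_{x_i}$ respects all three at once, which is why the balls in the subcover are taken of radius $\delta_{x_i}/2$ rather than $\delta_{x_i}$.
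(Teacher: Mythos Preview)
Your proof is correct and follows essentially the same route as the paper. The paper's justification (given in the text preceding the proposition) is to combine Proposition~\ref{wok}(2) (so $\varphi$ has the persistent shadowing property on $K$) with the observation that this property yields a neighborhood $U$ and a $\delta$; unpacking those two steps gives precisely the finite-subcover argument you wrote out, and your careful halving of the radii makes explicit what the paper leaves implicit.
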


 One can check that Proposition \ref{rtg} is true in the case of
 shadowing property, $\alpha$-persistent and $\beta$-persistent.

 \begin{proposition}
 If $\mathcal{X}\in \{ Sh(\varphi),
 UPersis_\alpha(\varphi), UPersis_\beta(\varphi)\}$ and $K\subseteq
 X$ be a compact set. Then for every $\epsilon>0$ there exist a
 neighborhood $U$ of $K$ and $\delta>0$ such that $U\subseteq
 \mathcal{X}_{\varphi}(\delta, \epsilon)$.
 \end{proposition}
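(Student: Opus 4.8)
The plan is to reproduce, for each of the three sets $\mathcal X$, the two-step scheme that made Proposition \ref{rtg} work — a pointwise "tracing/persistence through a ball" statement together with a compactness argument. (Here I read the hypothesis as $K\subseteq\mathcal X$, exactly as in Proposition \ref{rtg}; with no such containment the conclusion cannot hold.) So the first ingredient I would establish is a \emph{localization lemma}: for $x\in\mathcal X$ and $\epsilon>0$ there is $\delta_x>0$ such that a whole ball about $x$ lies in $\mathcal X_\varphi(\delta_x/2,\epsilon)$, i.e.\ $B(x,\delta_x/2)\subseteq\mathcal X_\varphi(\delta_x/2,\epsilon)$. The second ingredient is the \emph{patching step}: cover the compact set $K\subseteq\mathcal X$ by finitely many balls $B(x_1,\delta_{x_1}/2),\dots,B(x_n,\delta_{x_n}/2)$ with $x_i\in K$, set $U=\bigcup_{i=1}^n B(x_i,\delta_{x_i}/2)$ and $\delta=\tfrac12\min_i\delta_{x_i}$. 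Any $u\in U$ belongs to some $B(x_i,\delta_{x_i}/2)$, so a $\delta$-pseudo orbit (respectively a $\delta$-perturbation of $\varphi$ evaluated at a point of $B_\delta(u)$) associated with $u$ is in particular one associated with $B[x_i,\delta_{x_i}]$, since $\delta\le\delta_{x_i}/2$; the localization lemma at $x_i$ then produces the required $\epsilon$-tracing/persistence point, whence $U\subseteq\mathcal X_\varphi(\delta,\epsilon)$.

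For $\mathcal X=UPersis_\alpha(\varphi)$ and $\mathcal X=UPersis_\beta(\varphi)$ the localization lemma is essentially free: the defining condition for a uniformly $\alpha$- (resp.\ $\beta$-) persistent point $x$ with modulus $\delta_x$ already quantifies over all $\psi$ with $d_S(\varphi,\psi)<\delta_x$ and all $x'\in B_{\delta_x}(x)$, so if $d(u,x)<\delta_x/2$ then $B_{\delta_x/2}(u)\subseteq B_{\delta_x}(x)$ shows that $u$ satisfies the same inequality with data $(\delta_x/2,\epsilon)$, i.e.\ $u\in\mathcal X_\varphi(\delta_x/2,\epsilon)$. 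Combined with the patching step above this settles these two cases.

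For $\mathcal X=Sh(\varphi)$ the localization lemma is the one place where genuine work is needed, since by definition a shadowable point $x$ only controls pseudo orbits \emph{based} at $x$, and we must upgrade this to pseudo orbits passing $\delta$-close to $x$. This is done by the same start-correction device used in the proof of item (1) of Proposition \ref{wok} (this is also the content of the shadowable-point characterization behind Theorem 3.5 in \cite{sang}): given a small $\delta$-pseudo orbit $f$ with $d(f(e),x)<\delta$, replace the value $f(e)$ by $x$ to obtain a pseudo orbit $F$ based at $x$ whose constant is only slightly worse, apply shadowableness of $x$ to $F$, and check that the shadowing point still $\epsilon$-shadows $f$ up to a controlled error. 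One chooses $\delta_x$ small enough — using uniform continuity of the generators $\varphi_s$ — that each of these losses, and the discrepancy caused by the modification at $e$, fits inside $\epsilon$. I expect this start-correction estimate, and in particular bookkeeping how much $\delta$ must be shrunk so that $F$ remains a pseudo orbit and the final trace stays within $\epsilon$, to be the only real obstacle; the $\alpha$- and $\beta$-cases and the finite-subcover argument are routine.
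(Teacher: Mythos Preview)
Your proposal is correct and follows essentially the same route as the paper: the paper gives no separate proof but simply remarks that the argument for Proposition~\ref{rtg} (which rests on items (1) and (2) of Theorem~\ref{wok}) carries over, and your localization-lemma-plus-finite-subcover scheme is precisely that argument, with the start-correction device for the $Sh(\varphi)$ case being the exact analogue of item~(1). You are also right that the hypothesis must be read as $K\subseteq\mathcal X$, in parallel with Proposition~\ref{rtg}.
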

Assume that $supp(\mu)\subset PSh(X, \varphi)$ and $X$ be a compact
metric space. Since $supp(\mu)$ is a compact set, hence by
Proposition \ref{rtg}, for every $\epsilon>0$ there is $\delta>0$
such that
\begin{equation}
A\cap supp(\mu)\neq\emptyset \Rightarrow A\cap PS_{\delta,
\epsilon}(X, \varphi)\neq\empty.
\end{equation}
Hence we have the following corollary
\begin{equation}\label{pkiii}
supp(\mu)\subseteq PSh(\varphi)\Rightarrow
\mu\in\mathcal{M}_{PSh}(X, \varphi)
\end{equation}
By Proposition \ref{pki} and Remark \ref{pkii} and with similar
proof of Remark \ref{pkiii} we have the following proposition.
\begin{proposition}\label{Lb}
Let $\varphi:G\times X\to X$ be a continuous action of finitely
generated group on compact metric space $(X, d)$. Then
\begin{enumerate}
    \item $\mu\in M_{PSh}(X, \varphi)\Leftrightarrow
    supp(\mu)\subseteq PSh(\varphi)$,
    \item $\mu\in M_{Sh}(X, \varphi)\Leftrightarrow
    supp(\mu)\subseteq Sh(\varphi)$,
    \item $\mu\in M_{\alpha}(X, \varphi)\Leftrightarrow
    supp(\mu)\subseteq UPersis_\alpha(\varphi)$,
    \item $\mu\in M_{\beta}(X, \varphi)\Leftrightarrow
    supp(\mu)\subseteq UPersis_\beta(\varphi)$
\end{enumerate}
\end{proposition}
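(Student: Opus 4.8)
The plan is to read off all four equivalences from the pointwise results of Subsections~\ref{400}--\ref{401}: each one follows the same two-step argument, so I carry it out for item~(1) and indicate the routine substitutions for items~(2)--(4). Throughout, the crucial point is that $supp(\mu)$ is a \emph{compact} subset of $X$. For the forward implication, assume $\mu\in\mathcal{M}_{PSh}(X,\varphi)$; then Proposition~\ref{pki} gives that $\varphi$ has the persistent shadowing property on $supp(\mu)$, and since $supp(\mu)$ is compact, item~(2) of Theorem~\ref{wok} applied with $K=supp(\mu)$ yields $supp(\mu)\subseteq PSh(\varphi)$. For items~(2), (3), (4) one replaces Proposition~\ref{pki} by the shadowing, $\alpha$-persistent, resp.\ $\beta$-persistent case of Remark~\ref{pkii}, and replaces Theorem~\ref{wok}(2) by the corresponding ``property on a compact set'' characterization of $Sh(\varphi)$, $UPersis_\alpha(\varphi)$, resp.\ $UPersis_\beta(\varphi)$; for shadowing this localizes Theorem~3.5 of \cite{sang}, and the other two are proved exactly as Theorem~\ref{wok}(2).

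For the reverse implication, assume $supp(\mu)\subseteq PSh(\varphi)$ and fix $\epsilon>0$. Since $supp(\mu)$ is a compact subset of $PSh(\varphi)$, Proposition~\ref{rtg} produces $\delta>0$ with $supp(\mu)\subseteq PSh_{\varphi}(\delta,\epsilon)$. Now take any $A\in\mathcal{B}(X)$ with $\mu(A)>0$; since $\mu(X\setminus supp(\mu))=0$ we get $\mu(A\cap supp(\mu))=\mu(A)>0$, so $A\cap supp(\mu)\neq\emptyset$ and hence $A\cap PSh_{\varphi}(\delta,\epsilon)\neq\emptyset$. As $\epsilon>0$ was arbitrary, this is exactly $\mu\in\mathcal{M}_{PSh}(X,\varphi)$; it is the argument already used to obtain \eqref{pkiii}. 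For items~(2)--(4) one only replaces Proposition~\ref{rtg} by the proposition immediately following it, with $\mathcal{X}$ equal to $Sh(\varphi)$, $UPersis_\alpha(\varphi)$, resp.\ $UPersis_\beta(\varphi)$.

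The real content is not in either implication above but in the two imported ingredients. First, the forward direction needs, for \emph{each} of the four properties, the equivalence ``$\varphi$ has the property on a compact set $K$'' $\Longleftrightarrow$ ``$K$ lies in the corresponding set of good points''; this is Theorem~\ref{wok}(2) for persistent shadowing, and the three analogues have to be set up in the same way — the subtle point being that for $\alpha$-persistence the correct pointwise set is the \emph{uniform} one $UPersis_\alpha(\varphi)$ and not $Persis_\alpha(\varphi)$, since by Example~\ref{non-shadowable} these differ and only the uniform version can be localized to compact sets. Second, the reverse direction genuinely needs Proposition~\ref{rtg}: although $supp(\mu)\subseteq\bigcup_{\delta>0}PSh_{\varphi}(\delta,\epsilon)$ for each fixed $\epsilon$, the sets $PSh_{\varphi}(\delta,\epsilon)$ are closed rather than open, so compactness of $supp(\mu)$ by itself does not yield a single $\delta$ working for the whole support; Proposition~\ref{rtg} supplies precisely this uniformity through the ``pseudo-orbit through a ball'' reformulation in Theorem~\ref{wok}(1).
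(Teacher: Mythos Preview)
Your proof is correct and follows essentially the same route as the paper: the forward direction is Proposition~\ref{pki} (respectively Remark~\ref{pkii}) combined with the ``property on a compact set $\Leftrightarrow K$ inside the set of good points'' characterization, and the reverse direction is the argument leading to \eqref{pkiii} via Proposition~\ref{rtg} (and its analogue for the other three cases). You are more explicit than the paper in naming Theorem~\ref{wok}(2) as the bridge from ``persistent shadowing on $supp(\mu)$'' to ``$supp(\mu)\subseteq PSh(\varphi)$'', and your commentary on why the uniform sets $UPersis_\alpha(\varphi)$, $UPersis_\beta(\varphi)$ are the right pointwise objects is a welcome clarification, but the mathematical content is the same.
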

By Proposition \ref{kj}, the set of persistent shadowable points is
measureable. With similar proof, one can check that $Sh(\varphi)$,
$UPersis_\alpha(\varphi)$ and $UPersis_\beta(\varphi)$ are
measureable sets. Assume that $supp(\mu)\subseteq
\overline{PSh(\varphi)}$, then by Lemma 2.8 in \cite{boom}, if $X$
is a compact metric space, then there is a sequence
$\mu_n\in\mathcal{M}(X)$ with $supp(\mu_n)\subseteq PSh(\varphi)$
and converging to $\mu$ with respect to the $weak^*$ topology. By
Proposition \ref{Lb}, $\mu_n\in\mathcal{M}_{PSh}(X, \varphi)$. This
implies that $\mu\in \overline{\mathcal{M}_{PSh}(X, \varphi)}$.
Hence, we have  following relation
\begin{equation}\label{27}
 \mu(\overline{PSh(\varphi)})=1\Rightarrow \mu\in\overline{\mathcal{M}_{PSh}(X, \varphi)}.
\end{equation}
Conversely, let $\mu\in\overline{\mathcal{M}_{PSh}(X, \varphi)}$.
Choose $\mu_n\in \mathcal{M}_{PSh}(X, \varphi)$ such that $\mu_n\to
\mu$.  By inequality
$$\limsup_{n\to\infty}\mu_n(\overline{PSh(\varphi)})\leq
\mu(\overline{PSh(\varphi)})$$ and
$\mu_n(\overline{PSh(\varphi)})=1$, we have
$\mu(\overline{PSh(\varphi)})=1$. Hence we have the following
relation.
\begin{equation}\label{277}
\mu\in\overline{\mathcal{M}_{PSh}(X, \varphi)}\Rightarrow
\mu(\overline{PSh(\varphi)})=1.
\end{equation}
By Relation \ref{27} and Relation \ref{277}, we have the following
proposition.
\begin{proposition}\label{278}
Let $\varphi:G\times X\to X$ be a continuous action on compact
metric space $(X, d)$. Then $\mu(\overline{PSh(\varphi)})=1$ if and
only if $\mu\in\overline{\mathcal{M}_{PSh}(X, \varphi)}$.
\end{proposition}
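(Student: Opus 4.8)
The plan is to prove the two implications separately, exactly mirroring the two displayed relations that precede the statement, and then combine them. For the forward implication $\mu(\overline{PSh(\varphi)})=1\Rightarrow\mu\in\overline{\mathcal{M}_{PSh}(X,\varphi)}$, I would first rewrite the hypothesis as $supp(\mu)\subseteq\overline{PSh(\varphi)}$, using that $\overline{PSh(\varphi)}$ is closed. Since $PSh(\varphi)$ is Borel measurable by Proposition \ref{kj}, I can apply Lemma 2.8 of \cite{boom}: on a compact metric space, any Borel probability measure whose support lies in the closure of a Borel set $B$ is a weak$^*$ limit of Borel probability measures supported inside $B$. Taking $B=PSh(\varphi)$ produces $\mu_n\to\mu$ with $supp(\mu_n)\subseteq PSh(\varphi)$, and by the first item of Proposition \ref{Lb} each such $\mu_n$ belongs to $\mathcal{M}_{PSh}(X,\varphi)$; hence $\mu\in\overline{\mathcal{M}_{PSh}(X,\varphi)}$.

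For the converse $\mu\in\overline{\mathcal{M}_{PSh}(X,\varphi)}\Rightarrow\mu(\overline{PSh(\varphi)})=1$, I would choose $\mu_n\in\mathcal{M}_{PSh}(X,\varphi)$ with $\mu_n\to\mu$ in the weak$^*$ topology. By the first item of Proposition \ref{Lb}, $supp(\mu_n)\subseteq PSh(\varphi)\subseteq\overline{PSh(\varphi)}$, so $\mu_n(\overline{PSh(\varphi)})=1$ for all $n$. Since $\overline{PSh(\varphi)}$ is closed, the portmanteau theorem yields $\limsup_n\mu_n(\overline{PSh(\varphi)})\leq\mu(\overline{PSh(\varphi)})$, which forces $\mu(\overline{PSh(\varphi)})=1$. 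Together with the forward implication this proves the claimed equivalence.

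The main obstacle is the forward direction: everything there rests on having Lemma 2.8 of \cite{boom} (density of measures supported in $B$ within those supported in $\overline{B}$) available, and on the measurability of $PSh(\varphi)$ coming from Proposition \ref{kj} — it is worth double-checking that this measurability genuinely follows from Proposition \ref{kj} and the structure of the sets $PSh_{\varphi,S}(\delta,\epsilon)$ rather than needing a separate argument. The converse is routine, being merely weak$^*$ upper semicontinuity of the mass assigned to a closed set.
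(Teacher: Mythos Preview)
Your proposal is correct and follows essentially the same approach as the paper: the paper proves the two implications as Relations (\ref{27}) and (\ref{277}) immediately before the proposition, using Lemma 2.8 of \cite{boom} together with Proposition \ref{Lb} for the forward direction, and the portmanteau inequality $\limsup_n \mu_n(\overline{PSh(\varphi)})\leq \mu(\overline{PSh(\varphi)})$ for the converse. Your caveat about measurability is fair---the paper invokes Proposition \ref{kj} somewhat loosely for this, but the needed fact really comes from the closedness of each $PSh_{\varphi,S}(\delta,\epsilon)$ and the same countable decomposition, so no separate argument is required.
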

With similar proof, we have the following proposition.
\begin{proposition}\label{Lbb}
Let $\varphi:G\times X\to X$ be a continuous action on compact
metric space $(X, d)$. Then
\begin{enumerate}
    \item $\mu(\overline{PSh(\varphi)})=1\Leftrightarrow \mu\in\overline{\mathcal{M}_{PSh}(X,
    \varphi)}.$
    \item $\mu(\overline{Sh(\varphi)})=1\Leftrightarrow \mu\in\overline{\mathcal{M}_{Sh}(X,
    \varphi)}.$
    \item $\mu(\overline{UPersis_\alpha(\varphi)})=1\Leftrightarrow \mu\in\overline{\mathcal{M}_\alpha(X,
    \varphi)}.$
    \item $\mu(\overline{UPersis_\beta(\varphi)})=1\Leftrightarrow \mu\in\overline{\mathcal{M}_\beta(X,
    \varphi)}.$
\end{enumerate}
\end{proposition}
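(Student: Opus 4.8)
The plan is to observe that item (1) is precisely Proposition \ref{278}, and that items (2), (3), (4) follow by repeating the argument of Proposition \ref{278} verbatim, with $PSh(\varphi)$ replaced by $Sh(\varphi)$, $UPersis_\alpha(\varphi)$, $UPersis_\beta(\varphi)$ and $\mathcal{M}_{PSh}(X,\varphi)$ replaced by $\mathcal{M}_{Sh}(X,\varphi)$, $\mathcal{M}_\alpha(X,\varphi)$, $\mathcal{M}_\beta(X,\varphi)$ respectively. Thus it is enough to prove the generic equivalence $\mu(\overline{\mathcal{X}(\varphi)})=1\Leftrightarrow\mu\in\overline{\mathcal{M}_{\mathcal{X}}(X,\varphi)}$ for each of the four pairs $(\mathcal{X}(\varphi),\mathcal{M}_{\mathcal{X}}(X,\varphi))$ listed in Proposition \ref{Lb}.

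For the forward implication, suppose $\mu(\overline{\mathcal{X}(\varphi)})=1$. Since $\overline{\mathcal{X}(\varphi)}$ is closed and carries full $\mu$-measure, it contains $supp(\mu)$. The sets $Sh(\varphi)$, $UPersis_\alpha(\varphi)$, $UPersis_\beta(\varphi)$ are Borel measurable --- this is recorded right after Proposition \ref{kj}, with the same proof that works for $PSh(\varphi)$ --- so Lemma 2.8 of \cite{boom} applies to the inclusion $supp(\mu)\subseteq\overline{\mathcal{X}(\varphi)}$ and yields a sequence $\mu_n\in\mathcal{M}(X)$ with $supp(\mu_n)\subseteq\mathcal{X}(\varphi)$ converging to $\mu$ in the weak$^*$ topology. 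By Proposition \ref{Lb}, the inclusion $supp(\mu_n)\subseteq\mathcal{X}(\varphi)$ is equivalent to $\mu_n\in\mathcal{M}_{\mathcal{X}}(X,\varphi)$, so $\mu$ is a weak$^*$ limit of points of $\mathcal{M}_{\mathcal{X}}(X,\varphi)$, i.e. $\mu\in\overline{\mathcal{M}_{\mathcal{X}}(X,\varphi)}$.

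For the reverse implication, let $\mu\in\overline{\mathcal{M}_{\mathcal{X}}(X,\varphi)}$ and choose $\mu_n\in\mathcal{M}_{\mathcal{X}}(X,\varphi)$ with $\mu_n\to\mu$. By Proposition \ref{Lb}, $supp(\mu_n)\subseteq\mathcal{X}(\varphi)\subseteq\overline{\mathcal{X}(\varphi)}$, hence $\mu_n(\overline{\mathcal{X}(\varphi)})=1$ for every $n$. As $\overline{\mathcal{X}(\varphi)}$ is closed, upper semicontinuity of $\nu\mapsto\nu(\overline{\mathcal{X}(\varphi)})$ along weak$^*$ convergence gives $1=\limsup_{n\to\infty}\mu_n(\overline{\mathcal{X}(\varphi)})\le\mu(\overline{\mathcal{X}(\varphi)})$, so $\mu(\overline{\mathcal{X}(\varphi)})=1$. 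This is exactly the scheme of Relation \ref{27} and Relation \ref{277}.

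I do not expect a genuine obstacle here. The only step needing attention is the appeal to Lemma 2.8 of \cite{boom}, whose hypotheses require that $\mathcal{X}(\varphi)$ be Borel and that $supp(\mu)$ lie in its closure; the first is the measurability remark after Proposition \ref{kj}, the second is the opening line of the forward implication. Everything else is bookkeeping: the support--compatibility dictionary is Proposition \ref{Lb}, and the $\limsup$ inequality is the standard Portmanteau fact already invoked in the proof of Proposition \ref{278}. Consequently the four equivalences are all established by the single template above.
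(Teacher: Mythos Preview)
Your proposal is correct and follows essentially the same approach as the paper: the paper proves item (1) as Proposition \ref{278} via Relations \ref{27} and \ref{277} (Lemma 2.8 of \cite{boom} plus Proposition \ref{Lb} for the forward direction, the Portmanteau $\limsup$ inequality for the reverse), and then states that items (2)--(4) follow ``with similar proof,'' exactly as you do. The only minor slip is that the measurability remark you cite sits just after Proposition \ref{Lb} rather than after Proposition \ref{kj}.
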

 We claim that if
$PSh(\varphi)$ is a closed set in $X$, then
$\overline{\mathcal{M}_{PSh}(X,
    \varphi)}= \mathcal{M}_{PSh}(X,
    \varphi)$. Take $\mu_n\in\mathcal{M}_{PSh}(X, \varphi)$ with
    $\mu_n\to \mu$. Then $\limsup_{n\to
    \infty}\mu_n(PSh(\varphi))\leq
    \mu(PSh(\varphi))$ implies that
    $\mu(PSh(\varphi))=1$. Hence $supp(\mu)\subseteq
    PSh(\varphi)$. By Proposition \ref{Lb},
    $\mu\in\mathcal{M}_{PSh}(\varphi)$ i.e.
\begin{equation}\label{2312}
    \overline{PSh(\varphi)}=PSh(\varphi)\Rightarrow\overline{\mathcal{M}_{PSh}(X, \varphi)}= \mathcal{M}_{PSh}(X, \varphi).
\end{equation}

Conversely, let $\overline{\mathcal{M}_{PSh}(X,
    \varphi)}= \mathcal{M}_{PSh}(X,
    \varphi)$, we claim that $\overline{PSh(\varphi)}=PSh(\varphi)$.
    If it is not true, then there exist $\{x_n\}\subseteq
    PSh(\varphi)$ with $x_n\to x$ such that $x\notin PSh(\varphi)$.
    By $x_n\to x$, we have $m_{x_n}\to m_x$.
 Where  $m_t$ the Dirac measure supported on $t\in X$, indeed,
$m_t(A)=0$ or $1$ depending on whether $t\notin A$ or $t\in A$. It
is easy to see that

$$PSh(\varphi)=\{t\in X:m_t\in\mathcal{M}_{PSh}(X, \varphi)\}$$
 By $\{x_n\}\subseteq
    PSh(\varphi)$, we have $m_{x_n}\in\mathcal{M}_{PSh}(X,
    \varphi)$. Also, by $m_{x_n}\to m_x$ and $\overline{\mathcal{M}_{PSh}(X,
    \varphi)}= \mathcal{M}_{PSh}(X,
    \varphi)$, we have $x\in PSh(\varphi)$ which is a contradiction.
    This implies the following relation.

    \begin{equation}\label{12321}
\overline{\mathcal{M}_{PSh}(X,
    \varphi)}= \mathcal{M}_{PSh}(X,
    \varphi)\Rightarrow \overline{PSh(\varphi)}=PSh(\varphi).
    \end{equation}
    By Relation \ref{2312} and Relation \ref{12321}, we have the
    following proposition.
    \begin{proposition}\label{12354}
    Let $\varphi:G\times X\to X$ be a continuous action on compact
    metric space $(X, d)$. Then
    $$\overline{\mathcal{M}_{PSh}(X,
    \varphi)}= \mathcal{M}_{PSh}(X,
    \varphi)\Leftrightarrow \overline{PSh(\varphi)}=PSh(\varphi).$$
    \end{proposition}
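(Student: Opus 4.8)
The plan is to establish the two implications separately; both have essentially been isolated in the discussion preceding the statement as Relation \ref{2312} and Relation \ref{12321}, so the proof really amounts to assembling those two arguments, and I would simply record them cleanly.

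For the forward implication, I would assume $\overline{PSh(\varphi)}=PSh(\varphi)$, i.e.\ that $PSh(\varphi)$ is closed. Let $\mu\in\overline{\mathcal{M}_{PSh}(X,\varphi)}$ and choose $\mu_n\in\mathcal{M}_{PSh}(X,\varphi)$ with $\mu_n\to\mu$ in the weak$^*$ topology. By Proposition \ref{Lb}, $supp(\mu_n)\subseteq PSh(\varphi)$, so $\mu_n(PSh(\varphi))=1$ for every $n$. Since $PSh(\varphi)$ is a closed subset of the compact space $X$, the map $\nu\mapsto\nu(PSh(\varphi))$ is upper semicontinuous for the weak$^*$ topology, whence $1=\limsup_{n\to\infty}\mu_n(PSh(\varphi))\le\mu(PSh(\varphi))$, so $\mu(PSh(\varphi))=1$. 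Because $PSh(\varphi)$ is closed this forces $supp(\mu)\subseteq PSh(\varphi)$, and a second application of Proposition \ref{Lb} gives $\mu\in\mathcal{M}_{PSh}(X,\varphi)$. Hence $\overline{\mathcal{M}_{PSh}(X,\varphi)}\subseteq\mathcal{M}_{PSh}(X,\varphi)$, the reverse containment being trivial.

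For the converse I would argue by contraposition. Suppose $PSh(\varphi)$ is not closed, so there are $x_n\in PSh(\varphi)$ with $x_n\to x$ and $x\notin PSh(\varphi)$. The key observation is the identity $PSh(\varphi)=\{t\in X: m_t\in\mathcal{M}_{PSh}(X,\varphi)\}$, where $m_t$ denotes the Dirac mass at $t$: indeed $supp(m_t)=\{t\}$, so Proposition \ref{Lb} yields $m_t\in\mathcal{M}_{PSh}(X,\varphi)\iff\{t\}\subseteq PSh(\varphi)\iff t\in PSh(\varphi)$. Thus $m_{x_n}\in\mathcal{M}_{PSh}(X,\varphi)$ for all $n$, and since $x_n\to x$ implies $m_{x_n}\to m_x$ in the weak$^*$ topology, we get $m_x\in\overline{\mathcal{M}_{PSh}(X,\varphi)}$. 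If $\overline{\mathcal{M}_{PSh}(X,\varphi)}=\mathcal{M}_{PSh}(X,\varphi)$, this gives $m_x\in\mathcal{M}_{PSh}(X,\varphi)$, hence $x\in PSh(\varphi)$, a contradiction.

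I do not expect a genuine obstacle here: the statement is a direct corollary of the support characterization in Proposition \ref{Lb} together with two standard facts about the weak$^*$ topology on $\mathcal{M}(X)$, namely upper semicontinuity of $\nu\mapsto\nu(F)$ on closed sets $F$ and continuity of $t\mapsto m_t$. The only point meriting a line of care is that $PSh(\varphi)$ is a Borel (indeed measurable) set, so that $\mu(PSh(\varphi))$ is meaningful; this was already noted via Proposition \ref{kj}. One could also substitute the portmanteau theorem for the semicontinuity step, but that is not needed.
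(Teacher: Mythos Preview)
Your proposal is correct and follows essentially the same route as the paper: both implications are precisely the arguments recorded as Relation \ref{2312} and Relation \ref{12321}, using Proposition \ref{Lb}, the upper semicontinuity of $\nu\mapsto\nu(F)$ on closed $F$, and the Dirac-measure identity $PSh(\varphi)=\{t:m_t\in\mathcal{M}_{PSh}(X,\varphi)\}$. The only cosmetic difference is that you label as ``forward'' the direction the paper treats second (closedness of $PSh(\varphi)$ implying closedness of $\mathcal{M}_{PSh}(X,\varphi)$), but both directions are handled identically in substance.
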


 On can check that result of Proposition \ref{12354} can be obtain
 for other type of shadowing,indeed we have the following
 proposition.

   \begin{proposition}\label{12355}
    Let $\varphi:G\times X\to X$ be a continuous action on compact
    metric space $(X, d)$. Then
    \begin{enumerate}
        \item $\overline{\mathcal{M}_{PSh}(X,
    \varphi)}= \mathcal{M}_{PSh}(X,
    \varphi)\Leftrightarrow \overline{PSh(\varphi)}=PSh(\varphi).$
        \item $\overline{\mathcal{M}_{Sh}(X,
    \varphi)}= \mathcal{M}_{Sh}(X,
    \varphi)\Leftrightarrow \overline{Sh(\varphi)}=Sh(\varphi).$
        \item $\overline{\mathcal{M}_{\alpha}(X,
    \varphi)}= \mathcal{M}_{\alpha}(X,
    \varphi)\Leftrightarrow \overline{Persis_\alpha(\varphi)}=Persis_\alpha(\varphi).$
        \item $\overline{\mathcal{M}_{\beta}(X,
    \varphi)}= \mathcal{M}_{\beta}(X,
    \varphi)\Leftrightarrow \overline{Persis_\beta(\varphi)}=Persis_\beta(\varphi).$
    \end{enumerate}

    \end{proposition}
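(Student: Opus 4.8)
The plan is to notice that item (1) is literally Proposition~\ref{12354}, and that its proof rested on exactly two ingredients which are equally available for the shadowing, $\alpha$-persistent and $\beta$-persistent versions: the support characterisation of the measure classes supplied by Proposition~\ref{Lb}, and the portmanteau inequality $\limsup_n \mu_n(C)\le \mu(C)$ valid for a closed set $C\subseteq X$ and a weak$^*$-convergent sequence $\mu_n\to\mu$ in $\mathcal{M}(X)$. So I would simply re-run the two implications behind Relations~\ref{2312} and~\ref{12321}, replacing $PSh(\varphi)$ by the pointwise set $\mathcal{X}(\varphi)$ that Proposition~\ref{Lb} pairs with the corresponding measure class $\mathcal{M}_{\mathcal{X}}(X,\varphi)$ (namely $Sh(\varphi)$, $UPersis_\alpha(\varphi)$, $UPersis_\beta(\varphi)$ for items (2),(3),(4)), and $\mathcal{M}_{PSh}(X,\varphi)$ by $\mathcal{M}_{\mathcal{X}}(X,\varphi)$.

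For the implication $\overline{\mathcal{X}(\varphi)}=\mathcal{X}(\varphi)\Rightarrow \overline{\mathcal{M}_{\mathcal{X}}(X,\varphi)}=\mathcal{M}_{\mathcal{X}}(X,\varphi)$: take $\mu_n\in\mathcal{M}_{\mathcal{X}}(X,\varphi)$ with $\mu_n\to\mu$; by Proposition~\ref{Lb} one has $supp(\mu_n)\subseteq \mathcal{X}(\varphi)$, hence $\mu_n(\mathcal{X}(\varphi))=1$, and since $\mathcal{X}(\varphi)$ is closed the portmanteau inequality gives $\mu(\mathcal{X}(\varphi))\ge \limsup_n\mu_n(\mathcal{X}(\varphi))=1$. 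Thus $supp(\mu)\subseteq \overline{\mathcal{X}(\varphi)}=\mathcal{X}(\varphi)$, and Proposition~\ref{Lb} again yields $\mu\in\mathcal{M}_{\mathcal{X}}(X,\varphi)$; this is exactly Relation~\ref{2312} with $\mathcal{X}$ in place of $PSh$.

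For the converse I would argue by contradiction as in Relation~\ref{12321}. If $\mathcal{X}(\varphi)$ is not closed, pick $x_n\in\mathcal{X}(\varphi)$ with $x_n\to x\notin\mathcal{X}(\varphi)$; then the Dirac masses satisfy $m_{x_n}\to m_x$ in the weak$^*$ topology. Since $supp(m_t)=\{t\}$, Proposition~\ref{Lb} gives the identity $\mathcal{X}(\varphi)=\{t\in X:\ m_t\in\mathcal{M}_{\mathcal{X}}(X,\varphi)\}$, so $m_{x_n}\in\mathcal{M}_{\mathcal{X}}(X,\varphi)$; closedness of $\mathcal{M}_{\mathcal{X}}(X,\varphi)$ then forces $m_x\in\mathcal{M}_{\mathcal{X}}(X,\varphi)$, i.e.\ $x\in\mathcal{X}(\varphi)$, a contradiction.

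The step I would be most careful about — rather than a real obstacle, it is the only place where the four cases are not completely interchangeable — is making sure that the pointwise set named in each item is exactly the one Proposition~\ref{Lb} attaches to the measure class, so that the Dirac-mass identity $\mathcal{X}(\varphi)=\{t:\ m_t\in\mathcal{M}_{\mathcal{X}}(X,\varphi)\}$ holds verbatim; for the $\alpha$- and $\beta$-versions this is the uniform persistent set $UPersis_\alpha(\varphi)$, resp.\ $UPersis_\beta(\varphi)$. The remaining points — measurability of these sets (by the argument of Proposition~\ref{kj}), the portmanteau inequality, and the continuity of $t\mapsto m_t$ — have all been recorded in the discussion preceding Propositions~\ref{Lb} and~\ref{12354}, so no new machinery is needed.
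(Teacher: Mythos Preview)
Your approach is exactly the paper's: Proposition~\ref{12355} is presented with the remark that the proof of Proposition~\ref{12354} carries over verbatim to the other types of shadowing, and you execute precisely that transfer via Proposition~\ref{Lb} and the portmanteau inequality, just as in Relations~\ref{2312} and~\ref{12321}. Your caution about items (3) and (4) is well placed and in fact exposes an internal inconsistency in the paper: Proposition~\ref{Lb} pairs $\mathcal{M}_\alpha,\mathcal{M}_\beta$ with the \emph{uniform} sets $UPersis_\alpha(\varphi),UPersis_\beta(\varphi)$, whereas the statement of Proposition~\ref{12355} (and Proposition~\ref{diracm}) names $Persis_\alpha(\varphi),Persis_\beta(\varphi)$; since Example~\ref{non-shadowable} shows these can differ, the argument --- yours and the paper's alike --- actually delivers the equivalences with $UPersis_\alpha(\varphi)$ and $UPersis_\beta(\varphi)$ in place of $Persis_\alpha(\varphi)$ and $Persis_\beta(\varphi)$.
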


If  $\varphi:G\times X\to X$ is equicontinuous
    action, then $UPersis_\beta(\varphi)=Persis_\beta(\varphi)$
is a closed subset of $X$. This implies the following proposition.

\begin{proposition}\label{cin}
Let $\varphi:G\times X\to X$ be a equicontinuous action of a
finitely generated  group $G$ on compact metric space. Then
\begin{enumerate}
    \item $\overline{\mathcal{M}_{\beta}(X,
    \varphi)}= \mathcal{M}_{\beta}(X,
    \varphi)$
    \item $\mu(Persis_\beta(\varphi))= 1$ if and only if $\mu\in\mathcal{M}_\beta(X,
    \varphi)$.
\end{enumerate}
\end{proposition}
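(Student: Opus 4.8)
The plan is to reduce everything to the fact, recalled just above the statement, that for an equicontinuous action one has $UPersis_\beta(\varphi)=Persis_\beta(\varphi)$ and that this common set is closed in $X$, and then to feed this into the measure-theoretic dictionary already established in Propositions \ref{Lb}, \ref{Lbb} and \ref{12355}. So I would begin the proof by recording that equicontinuity forces $Eq(\varphi)=X$, whence $Persis_\beta(\varphi)=Persis_\beta(\varphi)\cap Eq(\varphi)\subseteq UPersis_\beta(\varphi)\subseteq Persis_\beta(\varphi)$, and that the resulting set is closed (a uniform limit of uniformly $\beta$-persistent points is again uniformly $\beta$-persistent).

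For item (1) I would simply invoke Proposition \ref{12355}(4): it asserts that $\overline{\mathcal{M}_{\beta}(X,\varphi)}=\mathcal{M}_{\beta}(X,\varphi)$ if and only if $\overline{Persis_\beta(\varphi)}=Persis_\beta(\varphi)$, and the right-hand equality holds by the closedness just noted. To keep the argument self-contained one can instead argue directly: take $\mu_n\in\mathcal{M}_\beta(X,\varphi)$ with $\mu_n\to\mu$ in the $weak^*$ topology; by Proposition \ref{Lb}(4) we have $supp(\mu_n)\subseteq UPersis_\beta(\varphi)$, hence $\mu_n(UPersis_\beta(\varphi))=1$ for all $n$; applying the inequality $\limsup_{n\to\infty}\mu_n(C)\leq\mu(C)$, valid for closed sets $C$, to $C=UPersis_\beta(\varphi)$ gives $\mu(UPersis_\beta(\varphi))=1$, and since this set is closed this forces $supp(\mu)\subseteq UPersis_\beta(\varphi)$, so $\mu\in\mathcal{M}_\beta(X,\varphi)$ again by Proposition \ref{Lb}(4).

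For item (2), since $Persis_\beta(\varphi)=UPersis_\beta(\varphi)$ is closed, I would use the elementary fact that for a closed set $C$ one has $\mu(C)=1$ if and only if $supp(\mu)\subseteq C$; applying this with $C=UPersis_\beta(\varphi)$ and combining it with the characterization $\mu\in\mathcal{M}_\beta(X,\varphi)\Leftrightarrow supp(\mu)\subseteq UPersis_\beta(\varphi)$ from Proposition \ref{Lb}(4) yields $\mu(Persis_\beta(\varphi))=1\Leftrightarrow\mu\in\mathcal{M}_\beta(X,\varphi)$. Equivalently one may combine Proposition \ref{Lbb}(4), which gives $\mu(\overline{UPersis_\beta(\varphi)})=1\Leftrightarrow\mu\in\overline{\mathcal{M}_\beta(X,\varphi)}$, with the closedness $\overline{UPersis_\beta(\varphi)}=UPersis_\beta(\varphi)$ and with item (1).

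There is essentially no substantive obstacle here; the only point that must be checked with care is the input claim that equicontinuity makes $Persis_\beta(\varphi)$ coincide with $UPersis_\beta(\varphi)$ and be closed, and this is precisely what is recalled immediately before the proposition. Everything else is a routine application of the $weak^*$ portmanteau inequality together with the already-proved support characterizations.
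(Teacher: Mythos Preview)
Your proposal is correct and follows essentially the same approach as the paper: the paper's entire argument is the sentence ``If $\varphi:G\times X\to X$ is equicontinuous action, then $UPersis_\beta(\varphi)=Persis_\beta(\varphi)$ is a closed subset of $X$. This implies the following proposition,'' and you have correctly unpacked how this feeds into Propositions \ref{Lb}(4), \ref{Lbb}(4), and \ref{12355}(4) to yield both items. Your write-up is in fact more detailed than the paper's, which leaves the deduction implicit.
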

 Proof of the following proposition is clear.
\begin{proposition}\label{diracm}
Let $\varphi:G\times X\to X$ be a continuous action.
\begin{enumerate}
    \item $PSh(\varphi)=\{x\in X: m_x\in\mathcal{M}_{PSh}(X, \varphi)\}$.
    \item $Sh(\varphi)=\{x\in X: m_x\in\mathcal{M}_{Sh}(X, \varphi)\}$
    \item $Persis_\beta(\varphi)=\{x\in X: m_x\in\mathcal{M}_{\beta}(X, \varphi)\}$
    \item $Persis_\alpha(\varphi)=\{x\in X: m_x\in\mathcal{M}_{\alpha}(X, \varphi)\}$
\end{enumerate}

\end{proposition}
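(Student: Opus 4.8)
The plan is to exploit the single feature that characterises a Dirac mass: for $x\in X$ and a Borel set $A\subseteq X$ (singletons are closed, hence Borel, in a metric space) we have $m_x(A)>0$ if and only if $x\in A$, so $supp(m_x)=\{x\}$. Hence, when $\mu=m_x$, any clause of the form ``$\mu(A)>0\Rightarrow A\cap E\neq\emptyset$'' — which is exactly the shape of the conditions in Definition \ref{plr} and in the three-item list preceding Remark \ref{pkii} — reduces to the single requirement $x\in E$: among all admissible $A$ the implication is hardest for $A=\{x\}$, and $\{x\}\cap E\neq\emptyset$ is just $x\in E$, while conversely $x\in E$ yields $x\in A\cap E$ for every $A$ with $x\in A$.

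Carrying this out for $\mathcal{M}_{PSh}$: the assertion $m_x\in\mathcal{M}_{PSh}(X,\varphi)$ becomes ``for every $\epsilon>0$ there is $\delta>0$ such that, for every continuous action $\psi$ with $d_S(\varphi,\psi)<\delta$, $x$ lies in the set of points from which every $\delta$-pseudo orbit of $\psi$ is $(\psi,\epsilon)$-shadowed'', which is word for word the definition of $x\in PSh(\varphi)$. The same reduction applied to $\mathcal{M}_{Sh}$ gives $m_x\in\mathcal{M}_{Sh}(X,\varphi)\Leftrightarrow x\in Sh(\varphi)$. (For parts (1)--(2) one may instead simply invoke Proposition \ref{Lb} together with $supp(m_x)=\{x\}$.)

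For $\mathcal{M}_\alpha$ and $\mathcal{M}_\beta$ I would again pass to the decisive set $A=\{x\}$. Then $m_x\in\mathcal{M}_\alpha(X,\varphi)$ means: for every $\epsilon>0$ there is $\delta>0$ such that $x\in B(\epsilon,\varphi,\psi)$ for every $\psi$ with $d_S(\varphi,\psi)<\delta$; since $x\in B(\epsilon,\varphi,\psi)$ says precisely that $\Gamma_\epsilon^{\varphi,\psi}(x)\neq\emptyset$, i.e. that some $y\in X$ satisfies $d(\varphi(g,y),\psi(g,x))\le\epsilon$ for all $g\in G$, this is the definition of $x\in Persis_\alpha(\varphi)$. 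Replacing $B(\epsilon,\varphi,\psi)$ by $B(\epsilon,\psi,\varphi)$ (and using symmetry of $d$) gives in the same manner $m_x\in\mathcal{M}_\beta(X,\varphi)\Leftrightarrow x\in Persis_\beta(\varphi)$.

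The only point that needs a word — and the reason ``clear'' is not ``vacuous'' — is the strict-versus-nonstrict discrepancy: $\Gamma_\epsilon^{\varphi,\psi}$ is built with $\le\epsilon$, whereas $\alpha$- and $\beta$-persistence of a point are phrased with $<\epsilon$. One inclusion is immediate; for the other, run the $\mathcal{M}_\alpha$ (resp.\ $\mathcal{M}_\beta$) hypothesis at $\epsilon/2$ to recover the strict inequality with bound $\epsilon$. No obstacle survives this bookkeeping, and Proposition \ref{diracm} follows by collecting the four equivalences.
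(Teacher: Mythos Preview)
The paper gives no proof here beyond the sentence ``Proof of the following proposition is clear,'' so there is nothing to compare at the level of argument structure. Your unpacking via $m_x(A)>0\Leftrightarrow x\in A$ is exactly the intended triviality, and your $\epsilon/2$ bookkeeping for the $\le$ versus $<$ mismatch in parts (3)--(4) is the one genuine detail and is handled correctly.

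One remark: you parenthetically note that (1)--(2) could alternatively be obtained from Proposition~\ref{Lb} via $supp(m_x)=\{x\}$; be careful not to extend that shortcut to (3)--(4), since Proposition~\ref{Lb} there lands in $UPersis_\alpha(\varphi)$ and $UPersis_\beta(\varphi)$, not in $Persis_\alpha(\varphi)$ and $Persis_\beta(\varphi)$, and Example~\ref{non-shadowable} shows these can differ. Your direct argument from the definitions avoids this trap.
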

Assume that $\overline{PSh(\varphi)}=X$. By Theorem 6.3 in
\cite{par}, if $X$ is a separable metric space, then Then the set of
all measures whose supports are finite subsets of $PSh(\varphi)$ is
dense in $\mathcal{M}(X)$. Also by Lemma 2.7 in \cite{boom}, every
measure with finite support and supported on $PSh(\varphi)$ is a
finite convex combination of Dirac measures supported on points of
$PSh(\varphi)$. By Proposition \ref{diracm}, such Dirac measures are
compatible with persistent shadowing property. But finite convex
combination of measures in $\mathcal{M}_{PSh}(X, \varphi)$ is
compatible with persistent shadowing property. This implies that if
$\overline{PSh(\varphi)}=X$, then the set of finite convex
combination of measures in $\mathcal{M}_{PSh}(X, \varphi)$ is dense
in $\mathcal{M}(X)$. This means that if $\overline{PSh(\varphi)}=X$,
then $\overline{\mathcal{M}_{PSh}(X,\varphi)}=\mathcal{M}(X)$.
Conversely, assume that
$\overline{\mathcal{M}_{PSh}(X,\varphi)}=\mathcal{M}(X)$. We claim
that $\overline{PSh(\varphi)}=X$. If it is not true, then there is
$x\in X$ with $x\notin \overline{PSh(\varphi)}$. Choose open set $U$
of $x$ with $x\in U\subseteq X-\overline{PSh(\varphi)}$. Since
$m_x\in \overline{\mathcal{M}_{PSh}(X, \varphi)}$, there is a
sequence $\mu_n\in\mathcal{M}_{PSh}(X, \varphi)$ such that $\mu_n\to
m_x$. By Proposition \ref{Lb}, we have $Supp(\mu_n)\subseteq
PSh(\varphi)$. By $ U\subseteq X-\overline{PSh(\varphi)}$, we have
$\mu_n(U)=0$ for all $n\in \mathbb{N}$. There for $0=\liminf_{n\to
\infty}\mu_n(U)\geq m_x(U)=1$ which is a contradiction. With similar
technics we can prove the following propositions.
\begin{proposition}\label{3214}
Let $\varphi:G\times X\to X$ be a continuous action of a finitely
generated group $G$ on compact metric space $(X, d)$. Then the
following conditions hold.
\begin{enumerate}
    \item $\overline{\mathcal{M}_{PSh}(X, \varphi)}=\mathcal{M}(X)
    \Leftrightarrow \overline{PSh(\varphi)}=X$
    \item $\overline{\mathcal{M}_{Sh}(X, \varphi)}=\mathcal{M}(X)
    \Leftrightarrow \overline{Sh(\varphi)}=X$
    \item $\overline{\mathcal{M}_\beta(X, \varphi)}=\mathcal{M}(X)
    \Leftrightarrow \overline{Persis_\beta(\varphi)}=X$
    \item $\overline{\mathcal{M}_{\alpha}(X, \varphi)}=\mathcal{M}(X)
    \Leftrightarrow \overline{Persis_\alpha(\varphi)}=X$
\end{enumerate}
\end{proposition}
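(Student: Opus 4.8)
The plan is to prove all four equivalences of Proposition \ref{3214} by the same two-step scheme, which was already carried out in detail in the text for item (1): establish each direction separately, using the density of finitely-supported measures on one side and a contradiction argument via Dirac measures on the other.

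For the forward implication of item (1), suppose $\overline{PSh(\varphi)}=X$. By Theorem~6.3 in \cite{par} the set of measures with finite support contained in the dense set $PSh(\varphi)$ is dense in $\mathcal{M}(X)$; by Lemma~2.7 in \cite{boom} each such measure is a finite convex combination of Dirac measures at points of $PSh(\varphi)$; by Proposition \ref{diracm}(1) each such Dirac measure lies in $\mathcal{M}_{PSh}(X,\varphi)$, and since $\mathcal{M}_{PSh}(X,\varphi)$ is convex these finite combinations lie in it as well. Hence a dense subset of $\mathcal{M}(X)$ is contained in $\mathcal{M}_{PSh}(X,\varphi)$, so $\overline{\mathcal{M}_{PSh}(X,\varphi)}=\mathcal{M}(X)$. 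For the converse, if $\overline{PSh(\varphi)}\ne X$ pick $x\notin\overline{PSh(\varphi)}$ and an open neighborhood $U$ of $x$ disjoint from $PSh(\varphi)$; since $m_x\in\mathcal{M}(X)=\overline{\mathcal{M}_{PSh}(X,\varphi)}$ there is $\mu_n\in\mathcal{M}_{PSh}(X,\varphi)$ with $\mu_n\to m_x$, and by Proposition \ref{Lb}(1) $\mathrm{supp}(\mu_n)\subseteq PSh(\varphi)$, so $\mu_n(U)=0$ for all $n$, whence $0=\liminf_n\mu_n(U)\ge m_x(U)=1$, a contradiction. Items (2), (3), (4) follow verbatim with $PSh(\varphi)$ replaced by $Sh(\varphi)$, $Persis_\beta(\varphi)$, $Persis_\alpha(\varphi)$ respectively, using the corresponding parts of Proposition \ref{Lb} (recalling $\mathcal{M}_\alpha$ is characterized by $\mathrm{supp}(\mu)\subseteq UPersis_\alpha(\varphi)$, but by Proposition \ref{diracm}(4) the Dirac measures in $\mathcal{M}_\alpha(X,\varphi)$ are exactly those at points of $Persis_\alpha(\varphi)$, and similarly for $\beta$) together with the convexity of these measure sets and Proposition \ref{diracm}.

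The main subtlety worth checking is the interplay, in items (3) and (4), between the "uniform" persistence sets appearing in Proposition \ref{Lb} and the plain persistence sets appearing in the statement of Proposition \ref{3214}. The forward direction only needs that Dirac measures at points of $Persis_\alpha(\varphi)$ (resp. $Persis_\beta(\varphi)$) are compatible, which is exactly Proposition \ref{diracm}(4) (resp. (3)), and that the ambient measure class is convex; the converse contradiction argument only needs that a convergent sequence of compatible measures has supports in the relevant closed-under-support set, and one must be careful to use the set whose membership is detected by Dirac measures, i.e. $Persis_\alpha(\varphi)$, not $UPersis_\alpha(\varphi)$. Since $\mu_n(U)=0$ whenever $U$ misses the support set, the argument is insensitive to which of the two nested sets one uses as long as it is consistent with Proposition \ref{diracm}. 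So the only genuine obstacle is bookkeeping: matching each of the four cases to the correct pair of supporting lemmas (Proposition \ref{Lb}, Proposition \ref{diracm}, and the cited density results from \cite{par} and \cite{boom}) and verifying convexity of $\mathcal{M}_{Sh}$, $\mathcal{M}_\alpha$, $\mathcal{M}_\beta$, which follows exactly as for $\mathcal{M}_{PSh}$ from their defining conditions.
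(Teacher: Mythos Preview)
Your proposal is correct and follows essentially the same approach as the paper: the forward direction via density of finitely supported measures (Theorem~6.3 in \cite{par}, Lemma~2.7 in \cite{boom}) combined with Proposition~\ref{diracm} and convexity, and the converse via a Dirac-measure contradiction using Proposition~\ref{Lb} and the portmanteau inequality. Your explicit handling of the $UPersis$ versus $Persis$ discrepancy in items (3) and (4) is in fact more careful than the paper, which simply asserts ``with similar technics we can prove the following propositions''; your observation that $supp(\mu_n)\subseteq UPersis_\alpha(\varphi)\subseteq Persis_\alpha(\varphi)$ suffices for the contradiction is exactly what makes the argument go through.
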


\section*{acknowledgments} The author wishes to thank Professor Morales   for his idea about Theorem \ref{wok} given in \cite{morales2}.

\bibliographystyle{amsplain}

\end{document}